\newtheorem{theorem}{Theorem}[section]
\newtheorem{lemma}[theorem]{Lemma}
\theoremstyle{definition}
\newtheorem{rem}{Remark}[section]
\theoremstyle{definition}
\theoremstyle{definition}
\theoremstyle{remark}
\theoremstyle{remark}
\def\be{\begin{equation}}\def\ee{\end{equation}}
\def\ba{\begin{array}}\def\ea{\end{array}}
\def\bfg{\begin{figure}}\def\efg{\end{figure}}
\def\2pth{two point Hermite polynomial}
\def\c01{\mathcal{C}^{r}([0, 1])}
\def\01{[0, 1]}
\begin{document}

\title{A modified FC-Gram approximation algorithm with provable error bounds}

\author{Akash Anand}
\address{Akash Anand, Department of Mathematics and Statistics, Indian Institute of Technology Kanpur, India }
\email{akasha@iitk.ac.in}
\author{Prakash Nainwal}
\address{Prakash Nainwal, Department of Mathematics and Statistics, Indian Institute of Technology Kanpur, India }
\email{pnainwal@iitk.ac.in}


\begin{abstract}
The FC-Gram trigonometric polynomial approximation of a non-periodic function that interpolates the function on equispaced grids was introduced in 2010 by Bruno and Lyon  \cite{bruno2010high}. Since then, the approximation algorithm and its further refinements have 
been used extensively in numerical solutions of various PDE-based problems, and it has had impressive success in handling challenging configurations. While much computational evidence exists in the literature confirming the rapid convergence of FC-Gram approximations, a theoretical convergence analysis has remained open. In this paper, we study a modified FC-Gram algorithm where the implicit least-squares-based periodic extensions of the Gram polynomials are replaced with an explicit extension utilizing two-point Hermite polynomials. This modification brings in two significant advantages - (i) as the extensions are known explicitly, the need to use computationally expensive precomputed extension data is eliminated, which, in turn, facilitates seamlessly changing the extension length, and (ii) allows for establishing provable error bounds for the modified approximations. We show that the numerical convergence rates are consistent with those predicted by the theory through a variety of computational experiments.
\end{abstract}


\keywords{Gram polynomial, Fourier Continuation, FC-Gram, convergence}


\maketitle
 

 \vspace*{0.05in}
  \noindent\textbf{{Mathematics Subject Classification.}} 65D15, 42A10\\
    \noindent\textbf{{Keywords:}} Gram polynomial, Fourier Continuation, FC-Gram, convergence

 

 \section{Introduction}
In many practical problems, the function to be approximated has discontinuous periodic extensions. For such functions, it is known that approximations by trigonometric polynomials do not converge uniformly and suffer from spurious oscillations near the boundary, an effect widely known as Gibb's phenomenon. Since its discovery,
numerous studies have been conducted to understand the underlying causes of the Gibbs phenomenon and to develop effective strategies to mitigate its effects (\cite{hewitt1979gibbs}, \cite{gottlieb1997gibbs}). One of the strategies to eliminate the ringing effect resulting from Gibbs' phenomenon relies on filtering out oscillations \cite{tadmor2007filters}. An alternative approach involves reconstructing a non-periodic function from its truncated Fourier series by expanding the series using Gegenbauer polynomials \cite{gottlieb1997gibbs, gelb2006robust, gottlieb1992gibbs}. The Padé approximations have also been used in this context \cite{driscoll2001pade, geer1995rational}.

In recent years, significant attention has been directed towards methods that rely on extending the function to a larger interval so that the extended function has a smooth periodic extension. In literature, such methods are typically called Fourier continuation (FC) or Fourier extension methods. Several researchers ( \cite{lyon2011fast}, \cite{matthysen2016fast}, \cite{huybrechs2010fourier}, \cite{anand2019fourier} \cite{boyd2002comparison}, \cite{boyd2001chebyshev}, \cite{bruno2007accurate}, \cite{gruberger2021two}), to name a few, have employed this idea to tackle a wide range of problems. 
Among these, the FC-Gram approximation strategy has garnered a lot of attention due to its success in handling a wide range of problems of practical importance.
The initial version of the technique was introduced by Bruno and Lyon in the year 2010 \cite{bruno2010high}. The method
has been employed successfully in solving a wide range of PDE-based problems involving complex geometries \cite{albin2011spectral, bruno2013, bruno2014, bruno2016, amlani2016fc, bruno2017, gaggioli2019, fontana2020, gaggioli2021, gaggioli2022, bruno2022, brunopaul2022, fontana2022}. Several innovations have been introduced in the continuation strategy of FC-Gram. Among others, one crucial design decision is to keep a fixed number of grid points in the extension interval. This fixed extension grid allows specific high-precision calculations essential for constructing an extension to be done offline and stored for repeated use later. Moreover, the size of the FFT calculations does not escalate rapidly with refinements in the mesh. Both these factors render the overall strategy computationally very efficient.
While the computational evidence in the literature confirms the rapid convergence of FC-Gram approximations, a theoretical convergence analysis has remained open.
In this paper, we study a modified FC-Gram algorithm where the implicit least-squares-based periodic extensions of gram polynomials are replaced with an explicit extension utilizing two-point Hermite polynomials.
This change is motivated by the following two factors. Firstly, the analytic expression for the extensions eliminates the need for computationally expensive precomputed extension data. This, in turn, facilitates refining the grid in the extension interval in a seamless manner to achieve convergence. Secondly, and more importantly, this modification allows for establishing provable error bounds for the approximations.

The paper is organized as follows: In section \ref{sec:fcgm}, we recall a version of the classical FC-Gram in some detail. Following this, in section \ref{sec:modfc}, we discuss the proposed modification in the FC-Gram algorithm. We also present a convergence analysis of the modified scheme. Finally, in section \ref{sec: numerics}, numerical experiments are included to demonstrate the performance of the approximation method and study the numerical and theoretical convergence rates.

\section{FC-Gram methodology} \label{sec:fcgm}

For $n\in\mathbb{N}$, consider the uniform grid on $[0,1]$ with grid spacing $h = 1/n$, that is, $x_j=jh, ~ 0 \le j \le n$. For given $C\in\mathbb{N}$ denote $b = 1+(C+1)h$. Given a smooth function $f$ on $[0,1]$, the FC-Gram algorithm constructs an interpolating trigonometric polynomial $t_{n,C}(f)$ of degree $n+C$ satisfying $t_{n,C}(f)(x_j) = f(x_j)=:f_j,\ j = 0, \ldots, n$, in two steps: extension and approximation. Both these steps can be summarized as follows:

\subsection{Extension}\label{subsec: extension}

 Construct a periodic function $f^c$ of period $b$ by introducing an extension $p$ on $[1,b]$ such that
    \begin{align}\label{f_c}
        f^c(x) =
        \begin{cases}
            f(x), & x\in[0, 1], \\
            p(x), & x\in[1, b].
        \end{cases}
    \end{align}
 The extension $p$ is constructed in the following two steps:

    \begin{enumerate}[i.]
        \item 
        {\bf (Projection onto Gram polynomials)}  
        For a chosen $s,d\in\mathbb{N}$ with $s(d-1)\le n+1$, two bilinear maps, given by
        \begin{align*}
            &\langle p,q\rangle_{L} = \sum_{j=0}^{d-1} p(x_{sj})q(x_{sj}), \\
            &\langle p,q\rangle_{R} = \sum_{j=0}^{d-1} p(x_{n-sj})q(x_{n-sj}),
        \end{align*}
        are used to obtain left and right Gram polynomials $p_\ell^L,\ p_\ell^R \in P_\ell$, $\ell = 0, \ldots, d-1$,  respectively satisfying $\langle p_k^L,p_\ell^L \rangle_{L} = \delta_{k\ell}$ and $\langle p_k^R,p_\ell^R \rangle_{R} = \delta_{k\ell}$ where $\mathcal{P}_{\ell}$ denotes the space of polynomials of degree $\ell$ or less. The parameter $s$ is referred to as the skipping parameter.
        Among the left Gram polynomials, the constant polynomial with $\langle p_0^L,p_0^L \rangle_{L} = 1$ is $p_0^L(x) = 1/\sqrt{d}$.  The linear polynomial satisfying $\langle p_1^L,p_0^L \rangle_{L} = 0$ and $\langle p_1^L,p_1^L \rangle_{L} = 1$ is given as
        \begin{align*}
        p_1^L(x) =  \sqrt{\frac{12}{(d-1)d(d+1)}} \left( \frac{x}{sh} -  \frac{d-1}{2} \right),\quad d\geq 2.
        \end{align*}
        It can be easily verified that the quadratic polynomial satisfying $\langle p_2^L,p_0^L \rangle_{L} = 0$, $\langle p_2^L,p_1^L \rangle_{L} = 0$ and $\langle p_2^L,p_2^L \rangle_{L} = 1$ is given by

         \begin{align*}
        p_2^L(x)=  \sqrt{\frac{5}{d(d^2-1)(d^2 - 4)}}\left( \frac{6 x^2}{s^2h^2} - \frac{6(d-1)x}{s h}+(d-1)(d-2)\right),\quad d\geq 3.
        \end{align*}
    
        The higher-degree left Gram polynomials are obtained following the Gram-Schmidt orthogonalization process 
        \begin{align}\label{Gram_Schmidt}
            q_\ell^L(x) = x^\ell - s^{\ell}h^\ell\sum_{k=0}^{\ell-1} \left(\sum_{j=0}^{d-1} j^\ell p_k^L(jsh)\right)p_k^L(x), \quad p_\ell^L(x) = q_\ell^L(x)/\sqrt{\langle q_\ell^L,q_\ell^L\rangle_{L}}.
        \end{align}
        The right Gram polynomials are obtained from the left Gram polynomials using 
        \begin{align*}
            p_\ell^R(x) = p_\ell^L(x-1+(d-1)sh), 
        \end{align*}
        a relation that follows from the fact that
        \begin{align*}
            &\langle p(\cdot-1+(d-1)sh),q(\cdot-1+(d-1)sh)\rangle_{R} =
            \langle p, q \rangle_L.
        \end{align*}
       
        The left and right Gram polynomials of degrees $5$ and less are shown in \Cref{fig:p} where $d=6$ and $s=1$. A straightforward induction on the degree $\ell$ shows that all these Gram polynomials are functions of $x/(sh)$, that is,
        \begin{align} \label{grampoly}
            p_\ell^L(x) = \tilde{p}_\ell^L(x/(sh)), \quad p_\ell^R(x) = \tilde{p}_\ell^R(x/(sh))
        \end{align}
        where $\tilde{p}_\ell^L,\ \tilde{p}_\ell^R$ are polynomials of degree $\ell$ with coefficients that do not depend on $sh$. For instance,
        \begin{align*}
            &\tilde{p}_1^L(u) = \sqrt{\frac{12}{(d-1)d(d+1)}} \left( u -  \frac{d-1}{2} \right),\\
            &\tilde{p}_2^L(u) = \sqrt{\frac{5}{d(d^2-1)(d^2-4)}}\left(6 u^2 - 6(d-1)u+(d-1)(d-2)\right).
        \end{align*}
        
        \begin{figure}
\centering
\begin{subfigure}{0.32\textwidth}
    \includegraphics[width=\textwidth,trim={35 20 35 10},clip]{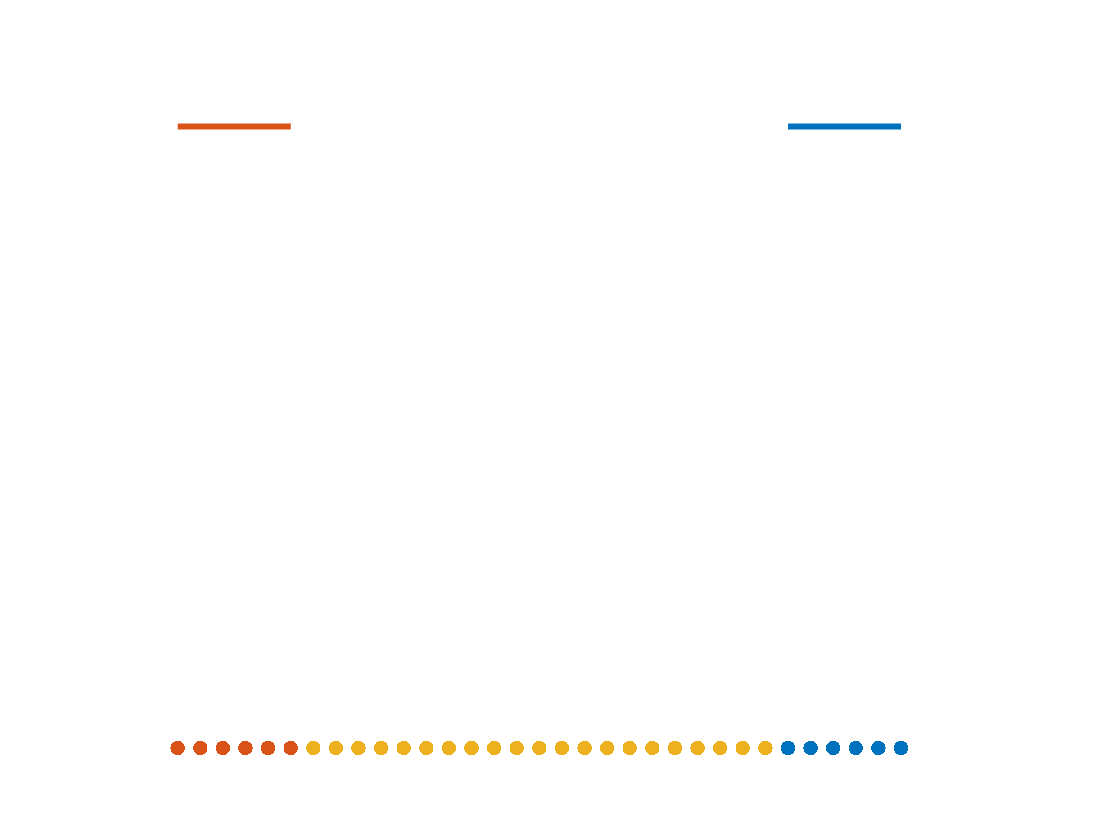}
    \caption{$p_0^L$ and $p_0^R$}
    \label{fig:p0}
\end{subfigure}
\hfill
\begin{subfigure}{0.32\textwidth}
    \includegraphics[width=\textwidth,trim={35 20 35 10},clip]{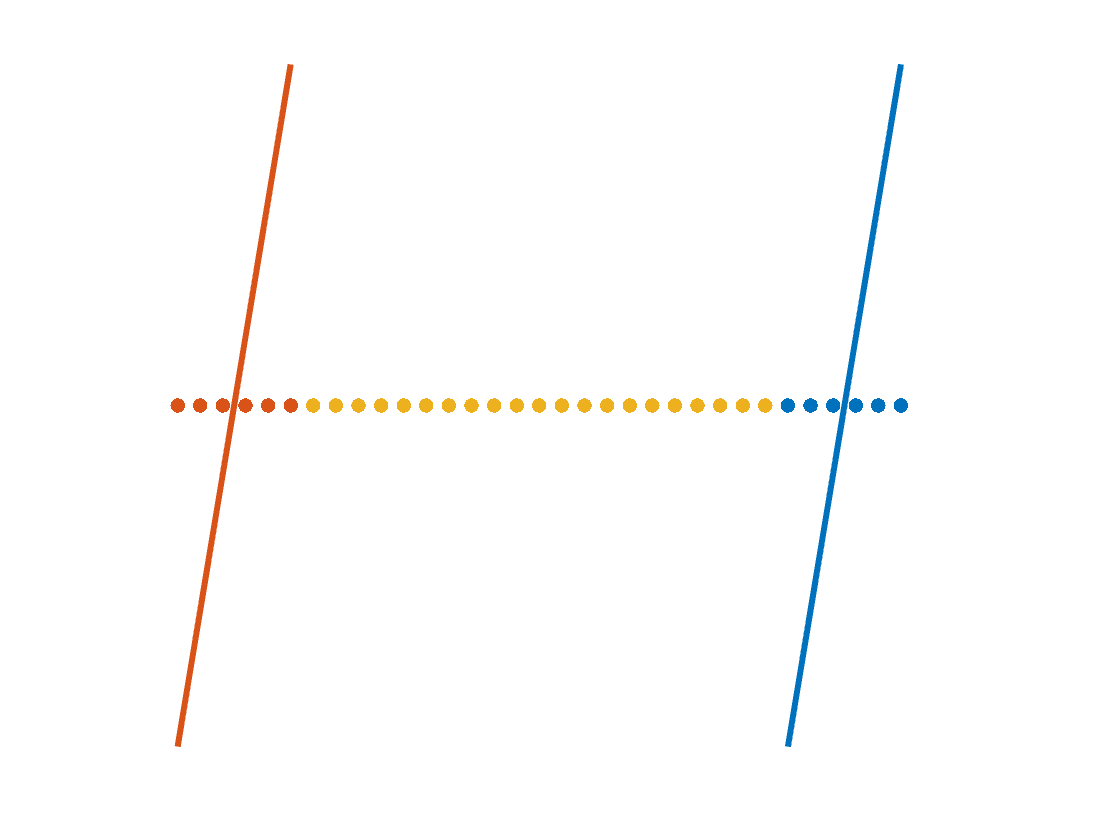}
    \caption{$p_1^L$ and $p_1^R$}
    \label{fig:p1}
\end{subfigure}
\hfill
\centering
\begin{subfigure}{0.32\textwidth}
    \includegraphics[width=\textwidth,trim={35 20 35 10},clip]{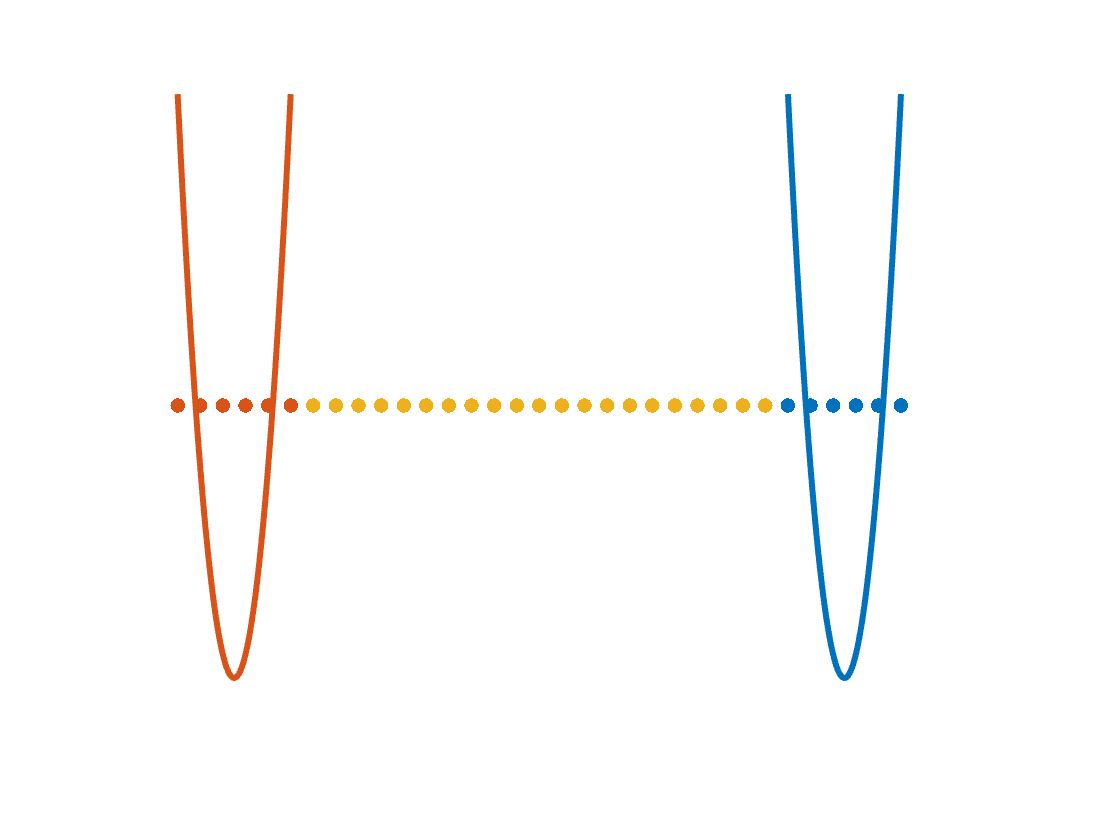}
    \caption{$p_2^L$ and $p_2^R$}
    \label{fig:p2}
\end{subfigure}
\vfill
\begin{subfigure}{0.32\textwidth}
    \includegraphics[width=\textwidth,trim={35 20 35 10},clip]{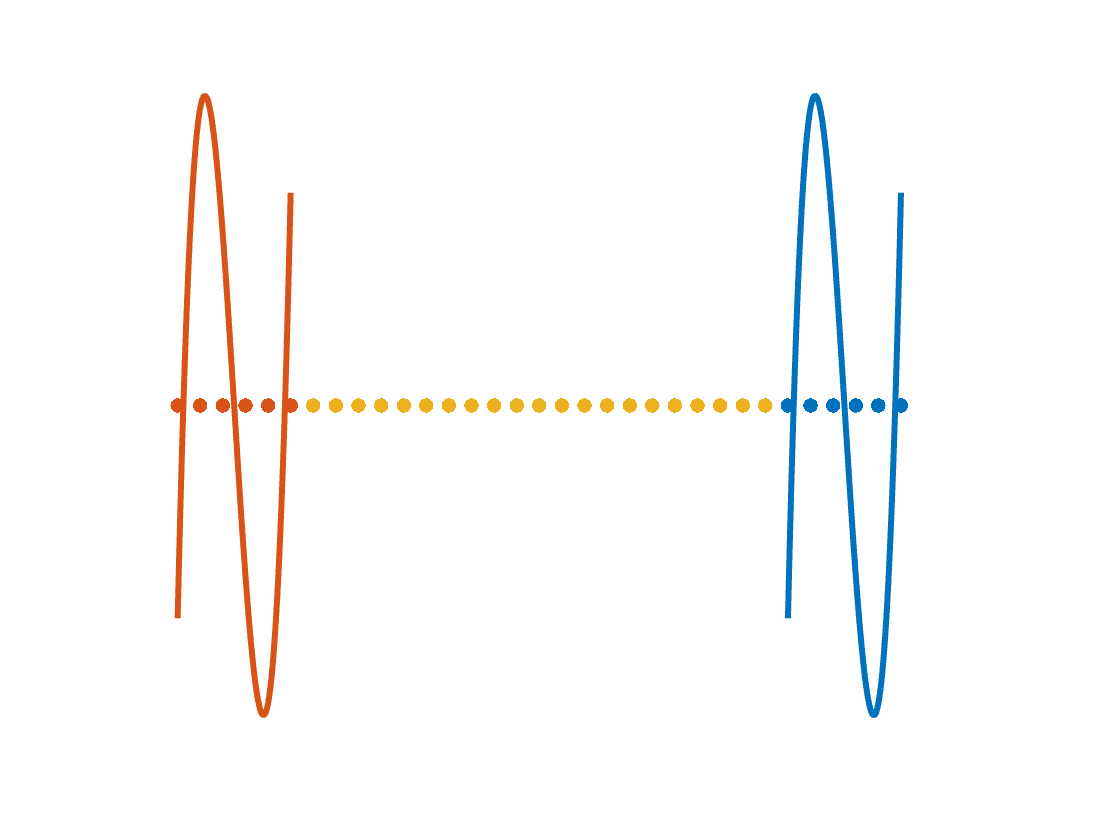}
    \caption{$p_3^L$ and $p_3^R$}
    \label{fig:p3}
\end{subfigure}
   \hfill
\centering
\begin{subfigure}{0.32\textwidth}
    \includegraphics[width=\textwidth,trim={35 20 35 10},clip]{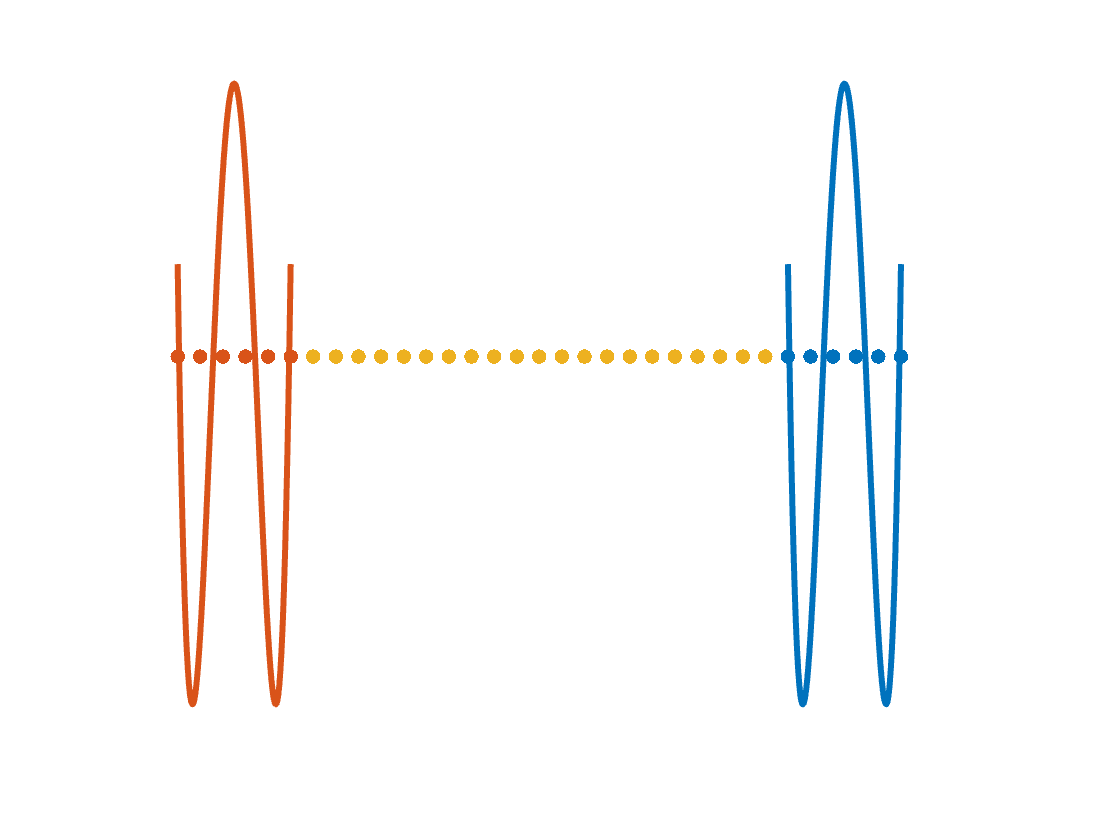}
    \caption{$p_4^L$ and $p_4^R$}
    \label{fig:p4}
\end{subfigure}
\hfill
\begin{subfigure}{0.32\textwidth}
    \includegraphics[width=\textwidth,trim={35 20 35 10},clip]{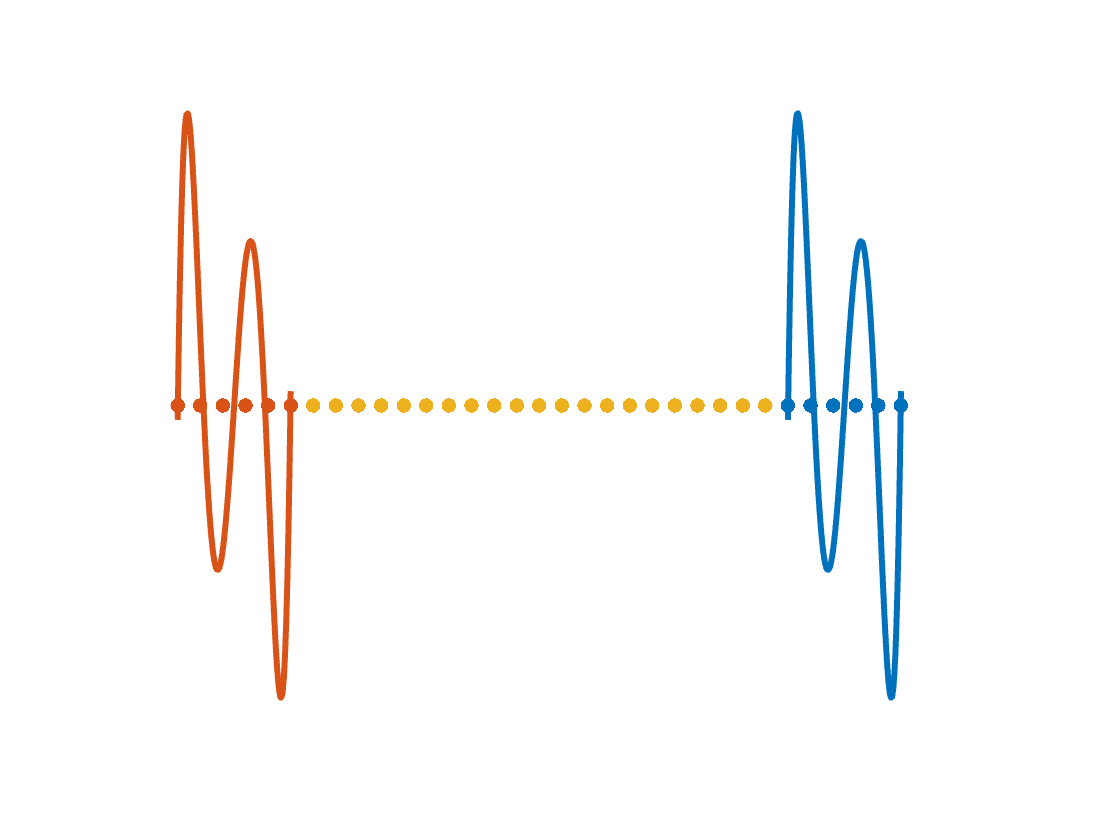}
    \caption{$p_5^L$ and $p_5^R$}
    \label{fig:p5}
\end{subfigure}   
\caption{Left and right Gram polynomials shown on the grid with $n = 32$, $d=6$ and $s=1$}
\label{fig:p}
\end{figure}


The left and right projection operators $P_{d}^L$ and $P_{d}^R$ respectively that project continuous functions on $[0,1]$ onto the space of polynomials of degree $d-1$ or less are defined as 
\begin{align}\label{ProjectionOnGramPoly}
   P_{d}^L(f)(x) =  \sum_{\ell = 0}^{d-1} \langle f,p_\ell^L \rangle_{L}\, p_\ell^L(x), \quad 
    P_{d}^R(f)(x) =  \sum_{\ell = 0}^{d-1} \langle f,p_\ell^R \rangle_{R}\, p_\ell^R(x).
\end{align}
        \item 
        {\bf (Continuation of Gram polynomials)}
        Corresponding to each Gram polynomial $p_\ell^L, \ p_\ell^R$, a periodic function is constructed satisfying 
        \begin{align*}
            p^{L,e}_\ell(x) \approx
            \begin{cases}
                0, &  1-(d-1)sh \le x \le 1, \\
                p_\ell^L(x - 1 - (C+1)h), &  1+(C+1)h \le x \le 1+(d+C)sh,
            \end{cases}
        \end{align*}
        and
        \begin{align*}
            p^{R,e}_\ell(x) \approx
            \begin{cases}
                p_\ell^R(x), &  1-(d-1)sh \le x \le 1, \\
                0, & 1+(C+1)h \le x \le 1+(d+C)sh,
            \end{cases}
        \end{align*}
 respectively. Note that one can obtain $p_{\ell}^{L, e}$ and $p_{\ell}^{R, e}$ in several different ways. Finally, the extension $p$ of $f$ is obtained as
        \begin{align}\label{left_right_ext}
            p(x) &= \sum_{\ell = 0}^{d-1} \langle f,p_\ell^L \rangle_{L}\, p^{L,e}_\ell\left(x\right) + \sum_{\ell = 0}^{d-1} \langle f,p_\ell^R \rangle_{R}\, p^{R, e}_\ell\left(x\right).
        \end{align}
    \end{enumerate}


\subsection{Approximation}

Once the extended data
\begin{align*}
    f^c_j&= \begin{cases}
       f(x_j), & j = 0,\cdots,n \\
       p(x_j), & j = n+1, \cdots, n+C
    \end{cases}
\end{align*}
using $p$ is available on the uniform grid of size $n+C$ on the interval $[0,b]$, the interpolating trigonometric polynomial $(t_{n, C}f)(x)$ satisfying
        \begin{align*}
            t_{n,C}(f)(x_j) = f^c_j, \quad j = 0, \ldots, n, n+1, \ldots, n+C.
        \end{align*}
is obtained as
\begin{align} \label{eq:tnc}
   t_{n, C}(f)(x)& = \sum_{j=0}^{n+C} f^c_j L_j^{(n+C)}(x),
\end{align}
where, 
\begin{align*}
     L_j^{(n+C)}(x) &= \begin{cases}
    1, & x = x_j, \\
    \dfrac{1}{n+C+1}\left(\sin \left(\dfrac{\pi  (n+C) (j-n x)}{n+C+1}\right) \csc \left(\dfrac{\pi 
   (j-n x)}{n+C+1}\right)+\cos (\pi  (j-n x))\right), & x \ne x_j,
    \end{cases}
\end{align*}
provided $n+C$ is odd, that we assume henceforth.

\subsection{Continuation of Gram polynomials in FC-Gram} \label{phi_LS}

\begin{figure}
\centering
    \includegraphics[width=0.8\textwidth]{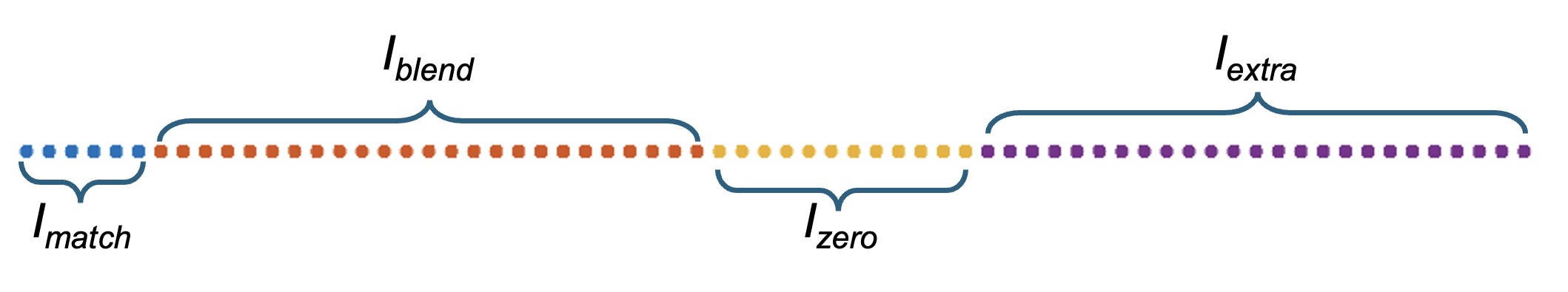}
    \caption{The uniform grid used for the construction of Gram polynomial extensions where $d = 6,~ C = 25,~ Z = 12$ and $E = 25$}
    \label{fig:grid}
\end{figure}

We now briefly recall the construction of $p_\ell^{L,e}$ and $p_\ell^{R,e}$ adopted by the classical FC-Gram (see \cite{amlani2016fc} for further details). For obtaining the extension of Gram polynomials, the following grids are used (see \Cref{fig:grid}):

\begin{align*}
    &I_{\text{match}} = \{ jsh : j = 0, \ldots, d-1 \},\\
    &I_{\text{blend}}=  \{ (d-1)sh + jh : j = 1, \ldots, C \},\\
    &I_{\text{zero}} = \{ ((d-1)s+C)h+ jsh : j = 1, \ldots, Z\},\\
    &I_{\text{extra}} = \{ ((d+Z-1)s+C)h + jh : j = 1, \ldots, E \},
\end{align*}
where integers $Z\ge d$ and $e_n$ are parameters of the construction algorithm. 
The extension is sought in the form of a trigonometric polynomial
\begin{align*}
    \varphi_\ell^{\text{LS}}(x)&= \sum_{k =-M}^{M} a_k^\ell \exp\left(\frac{2\pi i k x}{((d+Z-1)s+C+E+1)h}\right)
\end{align*}
where the unknown coefficients $a_k^\ell$ are obtained by solving an over-determined system of linear equations (in the least square sense). To obtain the desired system of linear equations, the refined grids $I_{\text{match}}^{\text{\text{ref}}}$ and  $I_{\text{zero}}^{\text{\text{ref}}}$ are obtained by inserting grid points in $I_{\text{match}}$ and $I_{\text{zero}}$ respectively with grid spacing $h/n_{\text{over}}$, that is,
\begin{align*}
    &I_{\text{match}}^{\text{\text{ref}}} = \{ jsh/n_{\text{over}} : j = 0, \ldots, (d-1)n_{\text{over}} \},\\
    &I_{\text{zero}}^{\text{\text{ref}}} = \{ ((d-1+j)s+C)h/n_{\text{over}} : j = 1, \ldots, Z n_{\text{over}} \},
\end{align*}
where $n_{\text{over}}$ is a chosen refinement parameter.
Finally, the corresponding system of linear equation is set to enforce
\begin{align*}
  \varphi_\ell^{\text{LS}}( z_j ) = p_{\ell}^L(z_j), \quad z_j \in I_{\text{match}}^{\text{\text{ref}}},\quad\quad
  \varphi_\ell^{\text{LS}}( z_j) = 0, \quad z_j \in I_{\text{zero}}^{\text{\text{ref}}}.
\end{align*}
More precisely, the linear system reads
\begin{align} \label{linsys}
    \begin{bmatrix}
    A \\ B
    \end{bmatrix}
    \begin{bmatrix}
        a_{-M}^\ell \\ \vdots \\ a_{M}^\ell
    \end{bmatrix}
    = \begin{bmatrix}
        \alpha_\ell \\ \beta_\ell
    \end{bmatrix}
\end{align}
where $((d-1)n_{\text{over}}+1)\times(2M+1)$ matrix $A$, $((Z-1)n_{\text{over}}+1)\times(2M+1)$ matrix $B$, and corresponding right hand side vectors $\alpha$ and $\beta$ are given by

  \begin{align*}
    A &= 
       \begin{bmatrix}
    1 & \cdots & 1 \\
        \exp\left(\dfrac{-2\pi is M}{((d+Z-1)s+C+E+1)n_{\text{over}}}\right) & \cdots & \exp\left(\dfrac{2\pi i sM}{((d+Z-1)s+C+E+1)n_{\text{over}}}\right) \\ \vdots & \ddots & \vdots \\
        \exp\left(\dfrac{-2\pi i Ms (d-1)}{((d+Z-1)s+C+E+1)}\right) & \cdots & \exp\left(\dfrac{2\pi i Ms (d-1)}{((d+Z-1)s+C+E+1)}\right) \\
    \end{bmatrix}, \\
    B &= 
       \begin{bmatrix}
        \exp\left(\dfrac{-2\pi i M(ds+C)}{((d+Z-1)s+C+E+1)n_{\text{over}}}\right) & \cdots & \exp\left(\dfrac{2\pi i M(ds+C)}{((d+Z-1)s+C+E+1)n_{\text{over}}}\right) \\ \vdots & \ddots & \vdots \\
        \exp\left(\dfrac{-2\pi i M ( (d-1+Z)s+C )}{((d+Z-1)s+C+E+1)}\right) & \cdots & \exp\left(\dfrac{2\pi i M ((d-1+Z)s+C)}{((d+Z-1)s+C+E+1)}\right) \\
    \end{bmatrix}, \\
    \alpha_\ell &=
    \begin{bmatrix}
        p_\ell^L(0) \\ p_\ell^L(sh/n_{\text{over}}) \\ \vdots \\ p_\ell^L((d-1)sh)
    \end{bmatrix} =
    \begin{bmatrix}
        \tilde{p}_\ell^L(0) \\ \tilde{p}_\ell^L(1/n_{\text{over}}) \\ \vdots \\ \tilde{p}_\ell^L((d-1))
    \end{bmatrix},
    \quad\quad\quad
    \beta_\ell =
    \begin{bmatrix}
        0 \\ \vdots \\ 0
    \end{bmatrix}.
\end{align*}

For the choice $E=C$, the extensions $p_\ell^{L,e}$ and $p_\ell^{R,e}$ are obtained from $\varphi_\ell^{\text{LS}}$ using
\begin{align}
    &p_\ell^{R,e}(x) = \varphi_\ell^{\text{LS}}(x-1+ (d-1)sh),
    \label{eq:leftext} 
    \\ 
    &p_\ell^{L,e}(x) = \varphi_\ell^{\text{LS}}(x-1 + ((d-1+Z)s+C)h) .
    \label{eq:rightext}
\end{align}

\begin{rem} \label{linsysrem}
 The linear system \cref{linsys} is independent of $h$; therefore, $a_k^\ell$ can be pre-computed to high accuracy for each $s$ and saved for repeated use.
\end{rem}

\subsection{The FC-Gram approximation errors} \label{sec:uniform}
It is known \cite{lyon2010high} that, for a fixed $C$, the approximation error $e_n = \Vert t_{n,C}(f)-f\Vert_{\infty,[0,1]}$ does not go to zero as $n$ approaches infinity. In practice, an optimal fixed $C$ is chosen to achieve a near-machine precision approximation accuracy. To achieve full convergence, the authors, in \cite{lyon2010high}, suggest increasing the skipping parameter $s$ as $n$ increases. An example of non-convergence of FC-Gram approximations for fixed $s$ and $C$ values can be seen in the convergence study presented in \cref{with skipping}. The same table confirms convergence as $s$ increases with $n$ while $C$ is kept fixed. Alternatively, convergence can also be achieved by keeping $s$ fixed and increasing $C$ as $n$ increases as can be seen in the convergence study presented in \cref{with increasing C}. The numerical order of convergence ($noc_n$) is obtained as
$noc_n = \log_2 (e_{n/2}/e_{n})$.

Each of these approaches has its advantages and disadvantages. While increasing the skipping parameter necessitates the availability of a voluminous amount of computationally expensive precomputed extension data, the amount of online computation stays comparatively low due to the fixed number of grid points in the extension interval. On the other hand, in view of \cref{linsysrem}, keeping $s$ fixed reduces the cost of solving the linear system in \cref{linsys} drastically as matrices $A$ and $B$ do not change with $n$. At the same time, the cost of online computation is more as the size of FFT calculations grows with $n$. One must note, however, that the computational complexity of both these approaches is $O(n \log n)$. In the modified FC-Gram that we study in this paper, we choose the second option of increasing $C$ with $n$ while keeping the skipping parameter fixed at $s=1$.

\begin{table} [!t] 
\centering
	\begin{tabularx}{0.70\textwidth}{ >{\setlength\hsize{0.6\hsize}\centering}X | >{\setlength\hsize{1.3\hsize}\centering}X | >{\setlength\hsize{0.9\hsize}\centering}X | >{\setlength\hsize{1.3\hsize}\centering}X | >{\setlength\hsize{0.9\hsize}\centering}X  }
		\hline
		\multirow{2}{\hsize}{\centering$n$} &
		\multicolumn{2}{>{\setlength\hsize{2.2\hsize}\centering}X |}{$s= 1, \quad C = 25$} &  \multicolumn{2}{>{\setlength\hsize{2.2\hsize}\centering}X }{$s = n/8, \quad C = 25$}  
		\tabularnewline
		\cline{2-5}
		& $e_n$ & $noc_n$ &  $e_n$  & $noc_n$ 
            \tabularnewline
		\hline\hline
   $2^{3}$ & $ 8.31\times 10^{-12} $ & --- & $ 8.31\times 10^{-12} $ & ---	
          \tabularnewline
		\hline
     $2^{4}$ & $ 1.16\times 10^{-11} $ & $ -0.48 $ & $ 9.49\times 10^{-12} $ & $ -0.19 $    
            \tabularnewline
		\hline
    $2^{5}$ & $ 1.55\times 10^{-11} $ & $ -0.41 $ & $ 2.32\times 10^{-13} $ & $ 5.36 $   
              \tabularnewline
		\hline
    $2^{6}$ & $ 1.72\times 10^{-11} $ & $ -0.15 $ & $ 4.16\times 10^{-15} $ & $ 5.80 $  
              \tabularnewline
		\hline
	\end{tabularx}
	\caption{A convergence study for approximations of $ f(x) = 1$ with skipping $s = n/8$ and without skipping $s =1$. Here $d=6$ and for refinement grid we have taken $N= 2^{14}$ }
 \label{with skipping}
\end{table}

\begin{table}[bht!]

\centering
\begin{tabularx}{0.40\textwidth}{ >{\setlength\hsize{0.6\hsize}\centering}X | >{\setlength\hsize{1.4\hsize}\centering}X | 
>{\setlength\hsize{1.0\hsize}\centering}X }
\hline
	$n$ & $e_n$ & $noc_n$  \tabularnewline
    \hline	\hline
   $2^{3}$ & $ 2.25\times 10^{-01} $ & ---
   \tabularnewline 
    \hline
   $2^{4}$ & $ 1.60\times 10^{-03} $ & $ 7.13 $ 
   \tabularnewline 
    \hline
  $2^{5}$ & $ 4.29\times 10^{-05} $ & $ 5.22 $ 
    \tabularnewline
    \hline
   $2^{6}$ & $ 6.29\times 10^-{08} $ & $ 9.41 $
    \tabularnewline 
    \hline
  $2^{7}$ & $ 2.53\times 10^{-13} $ & $ 17.92 $
   \tabularnewline 
    \hline
  $2^{8}$ & $ 3.33\times 10^{-15} $ & $ 6.25 $
  \tabularnewline 
    \hline
\end{tabularx}
   \caption{Convergence study for approximations of $ f(x) = 1$ \text{with} $ C = n/4 - 1$}
\label{with increasing C}
 \end{table}

\section{A modified FC-Gram} \label{sec:modfc}

We now study the algorithm where the $\phi_\ell^{\text{LS}}$ used in the classical version of FC-Gram for the extension of Gram polynomials is replaced with, say $\phi_\ell^H$, whose construction we describe below.

\subsection{Continuation of Gram polynomial using Hermite polynomials} \label{phi_H}

Using the polynomials
\begin{align}
    p_m^{u_1, u_2}(u)&=\dfrac{1}{m!}(u-u_1)^m\left(\dfrac{u-u_2}{u_1-u_2} \right)^{r+1}\sum_{\ell = 0}^{r-m}\binom{r+\ell}{r}\left(\dfrac{u-u_1}{u_2-u_1} \right)^{\ell},\label{pm_0}
\end{align}
we define

\begin{align*}
  \varphi_{\ell}^{\text{H}}(x)&= \begin{cases}
      p_{\ell}^L(x), & x\in[0, (d-1)h]\\
      \sum\limits_{m = 0}^{\ell} \left( p_{\ell}^{L}\right)^{(m)}\left((d-1)h\right) p_{m}^{(d-1)h, (d+C)h}(x), & x\in [(d-1)h, (d+C)h]\\
      0,& x\in [(d+C)h, (d+C+Z-1)h]\\
      \sum\limits_{m = 0}^{\ell} \left( p_{\ell}^{L}\right)^{(m)}\left(0\right) p_{m}^{(d+C+Z+E)h,(d+C+Z-1)h}(x), & x\in [(d+C+Z-1)h, (d+C+Z+E)h].
  \end{cases}
\end{align*}
As an example, in \cref{fig:phi_LS}, we depict $\varphi_{\ell}^{\text{H}}$ and $\varphi_{\ell}^{\text{LS}}$ for $\ell=0,1,2,3$ with $s=1$,  $d= 5$, $C = 25$, $Z= 12$, $E = 25$ and $h = 1/256$. 

Again, as in \cref{eq:leftext} and \cref{eq:rightext}, we obtain the required extensions of right and left Gram polynomials as
\begin{align*}
    p_{\ell}^{R, e}(x) &= \varphi_{\ell}^{\text{H}}(x-1+ (d-1)h),\\
      p_{\ell}^{L, e}(x) &= \varphi_{\ell}^{\text{H}}(x-1+ (d+C+Z-1)h),
\end{align*}
for $0\leq \ell \leq d-1$, which, on further simplification, yields
\begin{align}
    p_{\ell}^{R, e}(x) &= 
    \sum\limits_{m = 0}^{\ell} \left( p_{\ell}^{R}\right)^{(m)}\left(1\right) p_{m}^{1,b}(x),\label{left_ext_Hermite}\\
   p_{\ell}^{L, e}(x) &=    
   \sum\limits_{m = 0}^{\ell} \left( p_{\ell}^{L}\right)^{(m)}\left(0\right) p_{m}^{b, 1}(x).\label{right_ext_Hermite}
\end{align}
Finally, we obtain corresponding $p$ as in \cref{left_right_ext} to get the modified continuation to $f$ given by \cref{f_c}.

\begin{figure}[t]
\centering
\begin{subfigure}{0.49\textwidth}
    \includegraphics[width=\textwidth,trim={105 140 105 60},clip]{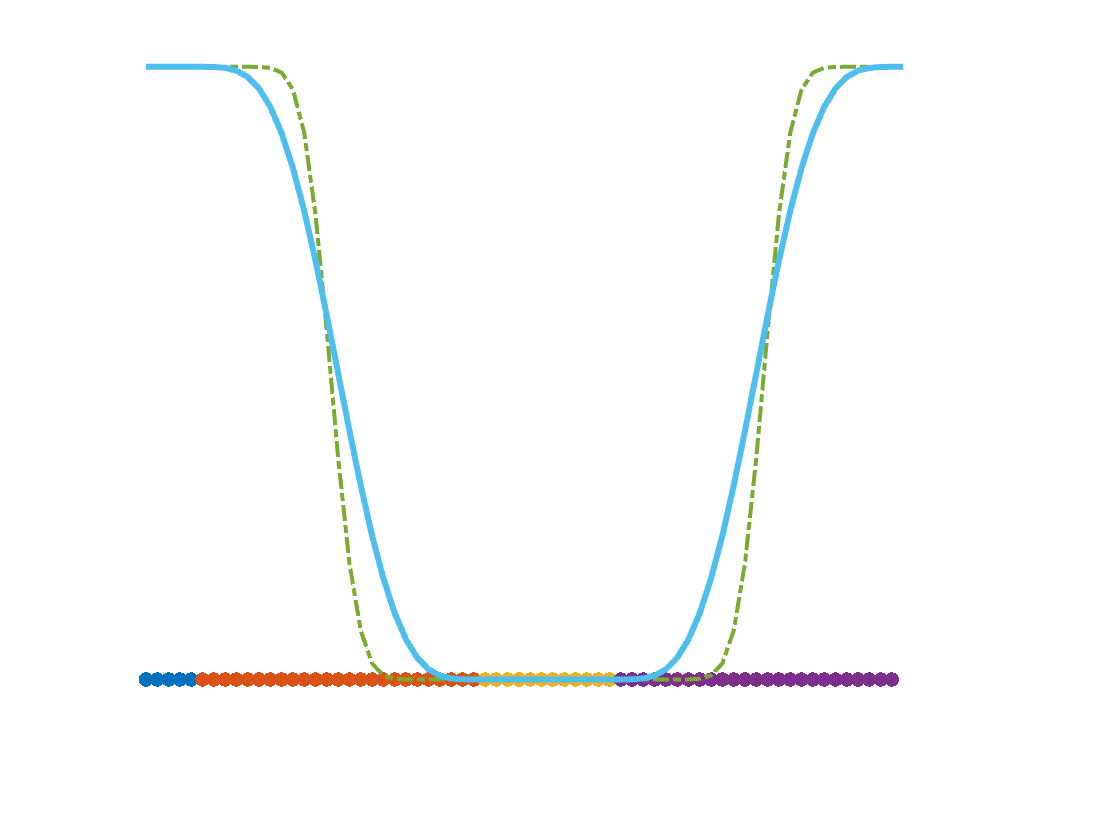}
    \caption{$\varphi_{0}^{\text{LS}}$ and $\varphi_{0}^{\text{H}}$}
    \label{fig:p0}
\end{subfigure}
\hfill
\begin{subfigure}{0.49\textwidth}
    \includegraphics[width=\textwidth,trim={105 140 105 120},clip]{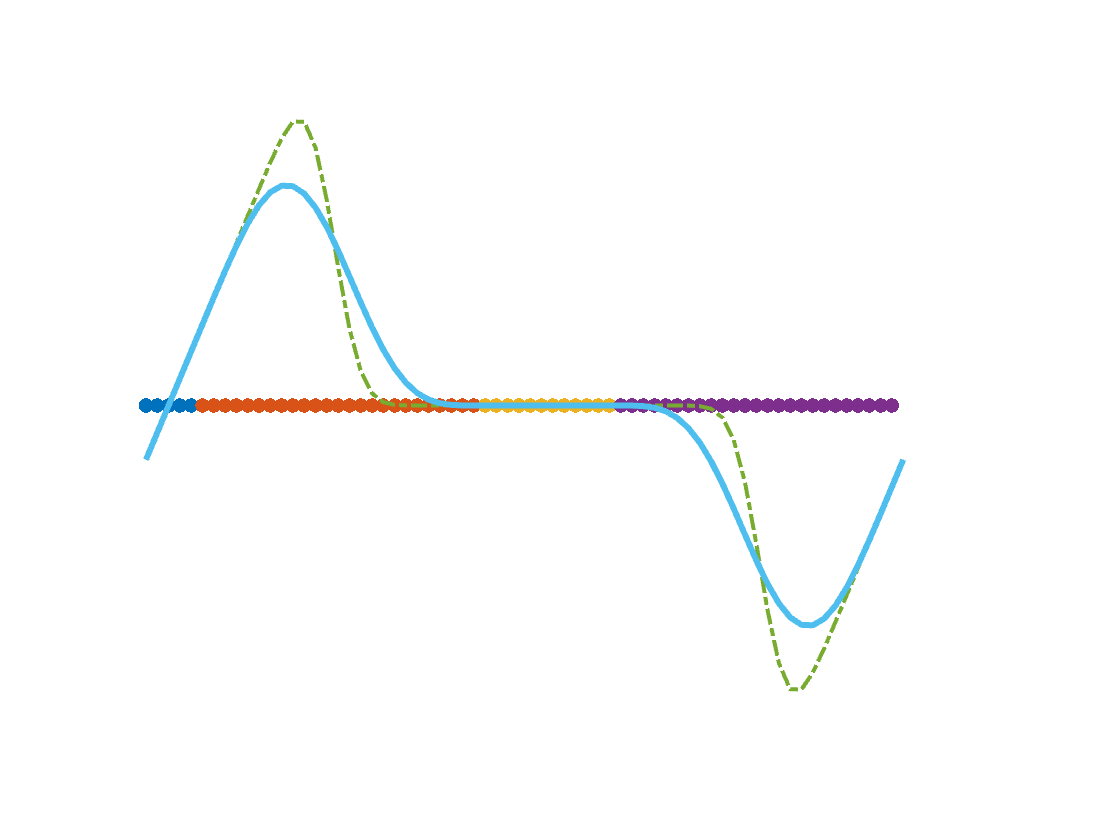}
    \caption{$\varphi_{1}^{\text{LS}}$ and $\varphi_{1}^{\text{H}}$}
    \label{fig:p1}
\end{subfigure}
\vfill
\centering
\begin{subfigure}{0.49\textwidth}
    \includegraphics[width=\textwidth,trim={105 140 105 130},clip]{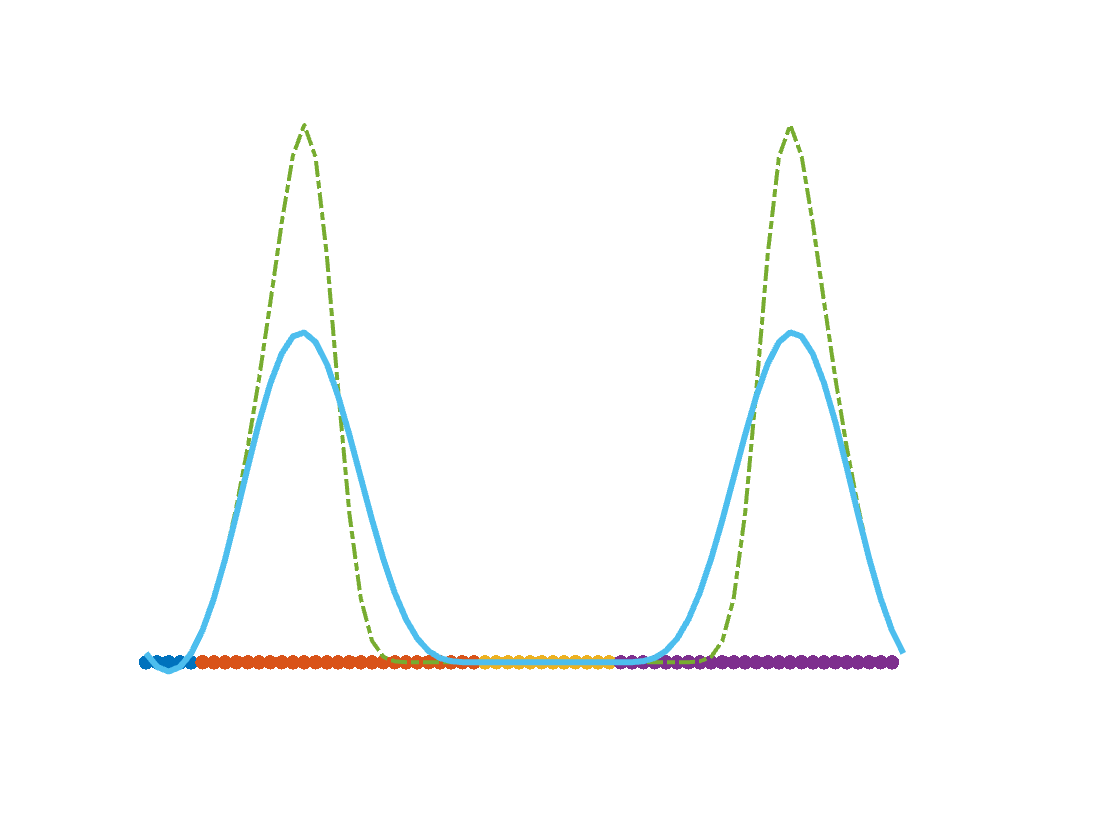}
    \caption{$\varphi_{2}^{\text{LS}}$ and $\varphi_{2}^{\text{H}}$}
    \label{fig:p2}
\end{subfigure}
\hfill
\begin{subfigure}{0.49\textwidth}
    \includegraphics[width=\textwidth,trim={105 120 105 60},clip]{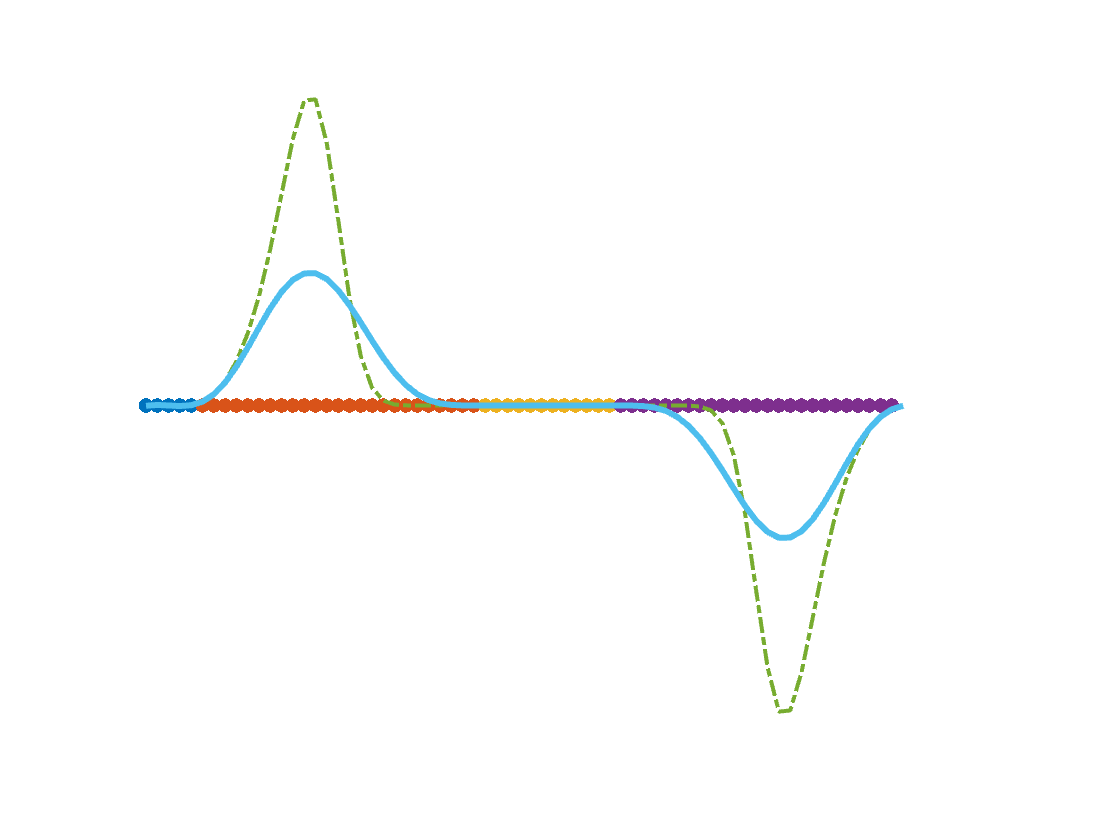}
    \caption{$\varphi_{3}^{\text{LS}}$ and $\varphi_{3}^{\text{H}}$}
    \label{fig:p3}
\end{subfigure}
\hfill  
\caption{plots of $\varphi_{\ell}^{\text{LS}}$ (dotted lines) and $\varphi_{\ell}^{\text{H}}$ (solid lines) for $0\leq \ell \leq 4$ with $s=1$,  $d= 5$, $C = 25$, $Z= 12$, $E = 25$ and $h = 1/256$}
\label{fig:phi_LS}
\end{figure}

\subsection{Convergence analysis}

In this section, we provide a convergence analysis of the proposed modified FC-Gram approximation. The following theorem states the main result establishing a bound for the approximation errors. As a consequence, the result also reveals the theoretical rate of convergence of the numerical scheme.

\begin{theorem}\label{theorem: main}
   Let $f\in C^{\infty}\left( [0, 1]\right)$ for some $d\in \mathbb{N}$. For $b\in\mathbb{Q}$ with $b>1$, let $\mathbb{N}_b = \{n\in\mathbb{N} : nb \in \mathbb{N} \text{ and } 2 \mid nb\}$. For all $n \in \mathbb{N}_b$, let $\tau_n(f) := t_{n,nb-n-1}$ be the trigonometric polynomial approximation that interpolates $f$ at $x_j = j/n$ where
    $t_{n,C}$ (as given in \cref{eq:tnc}) uses the extension $p$ (in \cref{left_right_ext}) with Gram polynomial extensions $p_{\ell}^{L,e}$, $p_{\ell}^{R,e}$ obtained using $\varphi_\ell^{\text{H}}$ (as in \cref{left_ext_Hermite} and \cref{right_ext_Hermite}). Then, there exists a $M>0$, independent of $n$, such that
    \begin{align*}
    &\Vert \tau_{n}(f)-f\Vert_{\infty,[0,1]} \le M n^{-d} 
    \end{align*}
    for all $n\in\mathbb{N}_b$.
\end{theorem}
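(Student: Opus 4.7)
The plan is to bound the trigonometric interpolation error $\|t_{n,C}(f^c)-f^c\|_{\infty,[0,b]}$ for the extended function $f^c$, and then use $f^c|_{[0,1]}=f$ to transfer the estimate to $[0,1]$. With $C=nb-n-1$ the total number of collocation nodes is $N=n+C+1=nb$, which is even by the hypothesis $2\mid nb$. The strategy is a Fourier-tail estimate on $f^c$, for which I first need to understand its regularity across the two interfaces $x=1$ and $x=0\equiv b$.

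The key preparatory step is to quantify the jumps of $f^c$ and its derivatives at these interfaces. Because the two-point Hermite basis $\{p_m^{u_1,u_2}\}$ satisfies $(p_m^{u_1,u_2})^{(k)}(u_1)=\delta_{mk}$ and $(p_m^{u_1,u_2})^{(k)}(u_2)=0$ for $0\le k\le r$ (with the design parameter $r\ge d-1$), the formulas in \cref{left_ext_Hermite} and \cref{right_ext_Hermite} give $p^{(k)}(1)=(P_d^R f)^{(k)}(1)$ and $p^{(k)}(b)=(P_d^L f)^{(k)}(0)$ for $0\le k\le d-1$. Since $\langle\cdot,\cdot\rangle_R$ and $\langle\cdot,\cdot\rangle_L$ use exactly $d$ nodes, the projections $P_d^R, P_d^L$ coincide with Lagrange interpolation of $f$ at the respective $d$ boundary nodes; in particular $(P_d^R f)(1)=f(1)$ and $(P_d^L f)(0)=f(0)$, so $f^c$ is continuous and $b$-periodic. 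For the higher-order jumps
\begin{align*}
J_k^{(1)}:=f^{(k)}(1)-(P_d^R f)^{(k)}(1), \qquad J_k^{(0)}:=f^{(k)}(0)-(P_d^L f)^{(k)}(0),
\end{align*}
I would Taylor-expand $f$ to degree $d-1$ around the interface node; since $P_d^{R/L}$ reproduces polynomials of degree $\le d-1$, the error localizes to the Taylor remainder, and a direct computation on $d$ equispaced nodes of spacing $h=1/n$ then yields $|J_k^{(x_0)}|=O(n^{-(d-k)})$ for $1\le k\le d-1$. For $k\ge d$, the representation
\begin{align*}
p=\sum_{m=0}^{d-1}(P_d^R f)^{(m)}(1)\,p_m^{1,b}+\sum_{m=0}^{d-1}(P_d^L f)^{(m)}(0)\,p_m^{b,1}
\end{align*}
(whose coefficients are $O(1)$ and whose basis functions are fixed polynomials on the $n$-independent interval $[1,b]$) shows $\|p^{(k)}\|_{\infty,[1,b]}=O(1)$ uniformly in $n$, so $|J_k^{(x_0)}|=O(1)$ in this regime.

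Plugging these jump bounds into the standard integration-by-parts identity for Fourier coefficients of a piecewise-smooth periodic function yields, for any fixed $J\ge d$,
\begin{align*}
\bigl|\widehat{f^c}_k\bigr| \;\le\; C\sum_{j=1}^{d-1}\frac{n^{-(d-j)}}{|k|^{j+1}} + \frac{C_J}{|k|^{J+1}},\qquad k\ne 0,
\end{align*}
with constants independent of $n$. Trigonometric interpolation at $N$ equispaced nodes incurs error bounded by the Fourier tail (with an aliasing factor of at most $2$), so
\begin{align*}
\|f^c-t_{n,C}(f^c)\|_{\infty,[0,b]} \;\le\; 2\sum_{|k|\ge N/2}\bigl|\widehat{f^c}_k\bigr| \;\le\; C'\sum_{j=1}^{d-1}n^{-(d-j)}N^{-j} + O(N^{-d}).
\end{align*}
Substituting $N=nb$ gives $n^{-(d-j)}N^{-j}=b^{-j}n^{-d}$, and the geometric-type sum in $b^{-j}$ (finite since $b>1$) produces the required $Mn^{-d}$ bound after restriction to $[0,1]$.

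The principal technical hurdle is the sharp derivative-level estimate $|J_k^{(x_0)}|=O(n^{-(d-k)})$. While the pointwise Lagrange interpolation error on $d$ clustered nodes is classical, controlling its $k$-th derivative at the outermost node, uniformly in $n$, calls for a Taylor/Newton-form computation after the rescaling $u=(x-x_{n-d+1})/h$, or equivalently a Peano-kernel representation of the derivative error. Once this estimate and the uniform boundedness $\|p^{(k)}\|_{\infty,[1,b]}=O(1)$ are secured, the remaining Fourier calculus and the aliasing bound for trigonometric interpolation at an even number of nodes follow standard routes.
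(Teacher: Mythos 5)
Your argument is correct and rests on exactly the two estimates the paper isolates as lemmas, but you assemble them by a different (and somewhat leaner) decomposition. The paper first introduces the reference extension $p^d_{\text{ref}}$ of \cref{p_ref}, built from the exact endpoint derivatives $f^{(m)}(0),f^{(m)}(1)$, splits the error as in \cref{eq:error0} into $f^c_{\text{ref}}-\tau_n(f^c_{\text{ref}})$ (whose interface jumps vanish through order $d-1$ by construction) plus $(f^c-f^c_{\text{ref}})-\tau_n(f^c-f^c_{\text{ref}})$ (whose jumps are the differences $(p-p^d_{\text{ref}})^{(k)}$, controlled by the Gram-projection estimate of \cref{lemma_projection}), and bounds each piece with the jump-based Fourier-tail lemma (\cref{lemma_interp_error}). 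You skip the reference extension and apply the same jump-based tail estimate directly to $f^c$: your quantities $J_k^{(1)}=f^{(k)}(1)-(P_d^Rf)^{(k)}(1)$ and $J_k^{(0)}=f^{(k)}(0)-(P_d^Lf)^{(k)}(0)$ are precisely what \cref{lemma_projection} bounds by $Mh^{d-k}$, and your observation that $P_d^{L},P_d^{R}$ reduce to Lagrange interpolation at the $d$ boundary nodes (so $J_0=0$ exactly) is a clean way to see continuity of $f^c$. Your identification of the sharp derivative-level estimate $|J_k|=O(n^{-(d-k)})$ as the main technical hurdle, and your proposed rescaling $u=(x-x_{n-d+1})/h$ with Taylor expansion against which the projection reproduces the polynomial part, is exactly how the paper proves \cref{lemma_projection} (via $p_\ell^L(x)=\tilde p_\ell^L(x/h)$). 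What the paper's two-step split buys is a modular presentation in which each application of \cref{lemma_interp_error} involves a function with jumps that are either exactly zero or supported on the extension interval; what your direct route buys is one fewer auxiliary object and a single application of the tail estimate, at the cost of having to track simultaneously the $O(n^{-(d-k)})$ jumps for $k\le d-1$ and the $O(1)$ bound on $\|p^{(k)}\|_{\infty,[1,b]}$ for $k\ge d$, both of which you do correctly.
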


To estimate the error in the numerical approximation method for a given parameter $d \in \mathbb{N}$, we make use of a reference extension, namely, $p^d_{\text{ref}}$ given by  
\begin{align} \label{p_ref}
    p_{\text{ref}}^d(x) = \sum_{m = 0}^{d-1} f^{(m)}(1) p_m^{1, b}(x) + \sum_{m = 0}^{d-1} f^{(m)}(0) p_m^{b, 1}(x).
\end{align}

Using this reference extension, we get
    \begin{align}\label{f_ref}
        f^c_{\text{ref}}(x) =
        \begin{cases}
            f(x), & x\in[0, 1], \\
            p^d_{\text{ref}}(x), & x\in[1, b].
        \end{cases}
    \end{align}
as another continuation of $f$. The approximation error can then be expressed as
\begin{align*}
& f(x) - \tau_{n}(f^c)(x)=
\left(f-f^c \right)(x) + \left[(f^c - f^c_{\text{ref}})(x) - \tau_{n}(f^c-f^c_{\text{ref}})(x)\right] + \left(f^c_{\text{ref}} - \tau_{n}(f^c_{\text{ref}})\right)(x).
\end{align*}
For $x\in [0,1]$, the expression for the error further simplifies as
\begin{align} \label{eq:error0}
    f(x) - \tau_{n}(f)(x) &= \left[(f^c - f^c_{\text{ref}})(x) - \tau_{n}(f^c-f^c_{\text{ref}})(x)\right] + \left(f^c_{\text{ref}} - \tau_{n}(f^c_{\text{ref}})\right)(x)
\end{align}
We, thus, see that the approximation error can be estimated in terms of $\left(g - \tau_{n}(g)\right)(x)$. Note that, using the $b$-periodic Fourier series of $g$ 
\begin{align*}
    g(x) = \sum_{\ell=-\infty}^{\infty} c_\ell(g)e^{2\pi i \ell x/b}, \quad c_\ell(g) = \frac{1}{b}\int_0^b g(x)e^{-2\pi i \ell x/b},
\end{align*}
and the trigonometric polynomial 
\begin{align*}
    g(x) = \sum_{\ell=-nb/2}^{(nb/2)-1} \tilde{c}_\ell(g)e^{2\pi i \ell x/b}, \quad \tilde{c}_\ell(g) = \frac{1}{nb}\sum_{j=0}^{nb-1} g(x_j)e^{-2\pi i \ell x_j/b},
\end{align*} 
that interpolates $g$ at $x=x_j, j = 0,\ldots, nb-1$,
we have
\begin{align*}
    g(x)-\tau_{n}(g)(x) 
    = \sum_{\ell=-nb/2}^{(nb/2) -1} \sum_{\substack{m=-\infty \\ m \ne 0}}^{\infty} c_{\ell+(nb)m}(g)e^{2\pi i (\ell+(nb)m) x/b} - \sum_{\ell=-nb/2}^{(nb/2) -1} \left( \tilde{c}_\ell(g) - c_\ell(g)\right) e^{2\pi i \ell x/b}
\end{align*}
As
\begin{align*}
    \tilde{c}_\ell(g) - c_\ell(g) 
    = \sum_{\substack{m=-\infty \\ m \ne 0}}^\infty c_{\ell+(bn)m}(g),
\end{align*}
the approximation error reads
\begin{align} \label{eq:error1}
    &g(x)-\tau_{n}(g)(x)
    = \sum_{\ell=-nb/2}^{(nb/2) -1} \sum_{\substack{m=-\infty \\ m \ne 0}}^{\infty} c_{\ell+(nb)m}(g) \left( e^{2\pi i n m x} - 1 \right) e^{2\pi i \ell x/b} 
\end{align}

In \cref{lemma_interp_error}, we obtain a generic bound on such interpolation errors.

\begin{lemma}\label{lemma_interp_error}
Let $b\in\mathbb{Q}$ with $b>1$ and
$g :\mathbb{R} \to \mathbb{R}$ be a $b$-periodic function of the form
\begin{align*}
  g(x)& = \begin{cases}
    g_1(x), x\in [0 , 1]\\
    g_2(x), x\in (1 , b)
\end{cases}  
\end{align*}
where $g_1 \in C^{(d+1)}[0,1]$ and $g_2 \in C^{(d+1)}[1,b]$.
Then, for $d\in\mathbb{N}$,
there exists a positive constant $M$ such that 
\begin{align*}
    &\Vert \tau_{n}(g)-g\Vert_{\infty,[0,1]} \le
        M \left( \sum_{k=0}^d \left(  \left| g_1^{(k)}(0+) -g_2^{(k)}(b-) \right| +\left| g_2^{(k)}(1+)-g_1^{(k)}(1-)\right|\right)n^{-k}+n^{-d} \right)
\end{align*}
for all $n\in\mathbb{N}_b$.
where 
\begin{align*}
    \varphi^{(k)}(0+) = \lim_{\substack{h \to 0 \\ h > 0}} \varphi^{(k)}(h), \quad \varphi^{(k)}(1\pm) = \lim_{\substack{h \to 0 \\ h > 0}} \varphi^{(k)}(1\pm h), \quad \varphi^{(k)}(b-) = \lim_{\substack{h \to 0 \\ h > 0}} \varphi^{(k)}(b- h).
\end{align*}
\end{lemma}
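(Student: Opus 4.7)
The plan is to substitute an integration-by-parts expansion of the Fourier coefficients $c_\ell(g)$ into the error representation \cref{eq:error1}, then control the resulting double sums term-by-term, organized by the order of jump discontinuity. Splitting
\begin{equation*}
c_\ell(g) = \frac{1}{b}\int_0^1 g_1(x)\, e^{-2\pi i \ell x/b}\,dx + \frac{1}{b}\int_1^b g_2(x)\, e^{-2\pi i \ell x/b}\,dx
\end{equation*}
and integrating by parts $d+1$ times in each piece, the boundary contributions at $x = 0$ and $x = b$ collapse (by $b$-periodicity of the exponential) into the jump $[g^{(j)}]_0 := g_1^{(j)}(0+) - g_2^{(j)}(b-)$, while those at $x = 1$ yield $[g^{(j)}]_1 := g_2^{(j)}(1+) - g_1^{(j)}(1-)$. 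For $\ell \neq 0$ this produces
\begin{equation*}
c_\ell(g) = \sum_{j=0}^{d}\frac{[g^{(j)}]_0 + e^{-2\pi i \ell/b}\,[g^{(j)}]_1}{(2\pi i \ell)^{j+1}} + \frac{c_\ell(g^{(d+1)})}{(2\pi i \ell)^{d+1}}.
\end{equation*}

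Substituting this into \cref{eq:error1} and using $e^{-2\pi i (\ell + nbm)/b} = e^{-2\pi i \ell/b}$ (valid because $n \in \mathbb{N}_b$ makes $nb$ an integer), the error cleanly separates as
\begin{equation*}
g(x) - \tau_n(g)(x) = \sum_{j=0}^{d}\bigl([g^{(j)}]_0\, E_j(x) + [g^{(j)}]_1\, \widetilde E_j(x)\bigr) + R(x),
\end{equation*}
where each $E_j$, $\widetilde E_j$ is a double sum over $|\ell| < nb/2$ and $m \neq 0$ with weight $(\ell + nbm)^{-(j+1)}$, and $R$ collects the analogous terms involving $c_{\ell+nbm}(g^{(d+1)})$. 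For $j \geq 1$, the exponent $j+1 \geq 2$ permits absolute summation: writing $k = \ell + nbm$, one obtains $|E_j(x)|, |\widetilde E_j(x)| \leq C\sum_{|k|\ge nb/2}|k|^{-(j+1)} = O(n^{-j})$. The remainder satisfies $|R(x)| \leq C n^{-d}$ via the uniform bound $|c_\ell(g^{(d+1)})| \leq b^{-1}\Vert g^{(d+1)}\Vert_{L^1(0,b)}$.

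The principal obstacle is the $j = 0$ term, where $\sum_{|k|\ge nb/2}|k|^{-1}$ diverges and the naive absolute bound fails; this reflects the Gibbs phenomenon, which forces the interpolation error at a genuine jump to remain $O(1)$ in magnitude rather than decay. Cancellation must be exploited: with $k = \ell + nbm$, one rewrites
\begin{equation*}
E_0(x) = \sum_{|k|\ge nb/2}\frac{e^{2\pi i k x/b} - e^{2\pi i \ell(k) x/b}}{2\pi i k},
\end{equation*}
where $\ell(k) \in [-nb/2, nb/2 - 1]$ denotes the residue of $k$ modulo $nb$. The first summand is the tail of the sawtooth Fourier series, uniformly bounded on $\mathbb{R}$ by a classical estimate. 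The second, after interchanging the order of summation and applying the cotangent partial-fraction identity for $\sum_{m\ne 0}(\ell + nbm)^{-1}$, reduces to an explicit trigonometric polynomial whose supremum norm on $[0,1]$ stays bounded independently of $n$. The same reasoning controls $\widetilde E_0$, yielding $\Vert E_0\Vert_{\infty,[0,1]} + \Vert \widetilde E_0\Vert_{\infty,[0,1]} \leq C$ and producing the $|[g^{(0)}]_0| + |[g^{(0)}]_1|$ contribution at order $n^0$. Assembling the bounds for $j = 0, 1, \ldots, d$ together with the remainder estimate delivers the claimed inequality.
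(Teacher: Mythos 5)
Your proof is correct and follows the same skeleton as the paper's argument: integrate by parts $d+1$ times to expand $c_\ell(g)$ into jump terms plus a remainder (your formula drops the harmless powers of $b$ present in \cref{eq:clg}; they are absorbed into $M$), substitute into \cref{eq:error1}, and bound the contributions order by order, using $\sum_{|k|\ge nb/2}|k|^{-(j+1)} = O(n^{-j})$ for $j\ge 1$ and the uniform $L^1$ bound on $c_\ell(g^{(d+1)})$ for the remainder. Where you genuinely depart from the paper is the leading $j=0$ term. The paper's displayed chain passes through $2\sum_\ell\sum_{m\ne0}|c_{\ell+nbm}(g)|$, a quantity that is actually divergent whenever $g$ has a nonzero zeroth-order jump; the finite bound $\tfrac{2}{\pi}(\cdots)\sum_{m\ge1}(4m^2-1)^{-1}$ that the paper then writes can only be reached by first pairing the $m$ and $-m$ contributions via $(\ell+nbm)^{-1}+(\ell-nbm)^{-1}=2\ell/(\ell^2-(nbm)^2)$, a cancellation the paper exploits implicitly but never records. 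You instead make the cancellation explicit through the sawtooth-tail estimate together with the cotangent partial-fraction identity; since $\tfrac{\pi}{nb}\cot(\pi\ell/(nb))-\ell^{-1}=O((nb)^{-1})$ uniformly for $0<|\ell|\le nb/2$ (because $\cot t - t^{-1}$ is bounded on $[-\pi/2,\pi/2]$), the second piece of $E_0$ has absolutely summable coefficients of total size $O(1)$, and $\widetilde E_0(x)=E_0(x-1)$ is handled identically. Both routes yield the same $O(1)$ control of the zeroth-order jumps, but yours is the more complete argument in that it identifies exactly why absolute summation fails at $j=0$ and repairs it explicitly. The only loose ends are cosmetic: the index set $\{k=\ell+nbm\}$ is symmetric only up to a single boundary term of size $O((nb)^{-1})$, and the conditionally convergent sums over $m$ should be read as symmetric limits, consistent with the principal-value form of the cotangent identity you invoke.
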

\begin{proof}

For $\ell\ne 0$, 
we have
\begin{align} \label{eq:clg}
    c_{\ell}(g)& =  \frac{1}{2 \pi i \ell}\left(\left( g_1(0+) -g_2(b-) \right) +\left( g_2(1+)-g_1(1-)\right) e^{- 2 \pi i \ell/b}  \right) \\ 
       &+ \frac{1}{b}\sum\limits_{k = 1}^{d} \left( \frac{b}{2\pi i \ell}\right)^{k+1} \left( \left(g_1^{(k)}(0+) - g_2^{(k)}(b-) \right) + \left(g_2^{(k)}(1+) -  g_1^{(k)}(1-)\right) e^{- 2\pi i \ell/b} \right) \nonumber \\
     &+\frac{1}{b}\left( \frac{b}{2\pi i \ell}\right)^{d+1} \left( \int_0^1 g_1^{(d+1)}(x) e^{-2 \pi i \ell x/b}\, dx +\int_1^b g_2^{(d+1)}(x) e^{-2 \pi i \ell x/b}\, dx \right) \nonumber
\end{align}

Using \cref{eq:error1}, the approximation error

\begin{align*}
    &|g(x)-\tau_{n}(g)(x)| \le
     2 \sum_{\ell=-nb/2}^{(nb/2) -1} \sum_{\substack{m=-\infty \\ m \ne 0}}^{\infty} |c_{\ell+(nb)m}(g)|\le \\
  & \frac{2}{\pi} \left( \left|g_1(0+)-g_2(b-) \right| + \left| g_2(1+)-g_1(1-)\right| \right) \sum_{m=1}^{\infty} \frac{1}{4m^2-1} +\\
    & \frac{2}{b} \sum_{k=1}^d \left(\frac{b}{\pi }\right)^{k+1} \left(\frac{1}{nb}\right)^{k} \left( \left|  g_1^{(k)}(0+)-g_2^{(k)}(b-)\right| + \left| g_2^{(k)}(1+)-g_1^{(k)}(1-)\right| \right) \sum_{m = 1}^{\infty}  \left(\frac{1}{2m-1}\right)^{k+1}+\\
    & \frac{2}{b}\left(\frac{b}{\pi}\right)^{d+1}\left(\frac{1}{nb}\right)^{d}  \left( \left\Vert g_1^{(d+1)} \right\Vert_{1,[0,1]} + \left\Vert g_2^{(d+1)} \right\Vert_{1,[1,b]} \right)  \sum_{m = 1}^{\infty}\left(\frac{1}{2m-1}\right)^{d+1},
\end{align*}
and, hence, the result follows.
\end{proof}

The \cref{lemma_interp_error} immediately implies
\begin{align} \label{eq:error_ref}
    \Vert f^c_{\text{ref}} - \tau_{n}(f^c_{\text{ref}})\Vert_{\infty,[0,1]} \le M\left( \left| f^{(d)}(0) -(p^d_{\text{ref}})^{(d)}(b) \right| +\left| (p^d_{\text{ref}})^{(d)}(1)-f^{(d)}(1)\right| + 1\right)n^{-d}
\end{align}
as $f^{(m)}(1)=\left(p^d_{\text{ref}}\right)^{(m)}(1)$ and $f^{(m)}(0)=\left(p^d_{\text{ref}}\right)^{(m)}(b)$ for $m=0,\ldots,d-1$. Before we estimate the other term in \cref{eq:error0} study, in the following lemma,
 how well the projection operator defined in \cref{ProjectionOnGramPoly} approximates a given function and its derivatives. 

\begin{table} [!t] 
	\begin{tabularx}{0.99\textwidth}{  >{\setlength\hsize{0.4\hsize}\centering}X | >{\setlength\hsize{1.3\hsize}\centering}X | >{\setlength\hsize{0.9\hsize}\centering}X | >{\setlength\hsize{1.3\hsize}\centering}X | >{\setlength\hsize{0.9\hsize}\centering}X | >{\setlength\hsize{1.3\hsize}\centering}X |
			>{\setlength\hsize{0.9\hsize}\centering}X  }
		\hline
		\multirow{2}{\hsize}{$n$} &  \multicolumn{2}{>{\setlength\hsize{2.2\hsize}\centering}X |}{$d = 3$} &
		\multicolumn{2}{>{\setlength\hsize{2.2\hsize}\centering}X |}{$d = 4$} &
		\multicolumn{2}{>{\setlength\hsize{2.2\hsize}\centering}X }{$d = 5$} \tabularnewline
		\cline{2-7}
		& $e_n$ & $noc_n$ &  $e_n$  & $noc_n$  & $e_n$ & $noc_n$ \tabularnewline
		\hline
        \hline
$2^{6}$ & $ 1.74\times 10^{-04} $ & ---  & $ 5.03\times 10^{-05} $ & ---& $ 2.74\times 10^{-05} $ & ---
 \tabularnewline
		\hline
$2^{7}$ & $ 2.31\times 10^{-05} $ & $ 2.92 $ & $ 1.17\times 10^{-06} $ & $ 5.42 $ & $ 1.31\times 10^{-06} $ & $ 4.39 $
\tabularnewline
		\hline
$2^{8}$ & $ 2.90\times 10^{-06} $ & $ 2.99 $  & $ 4.49\times 10^{-08} $ & $ 4.71 $  & $ 4.04\times 10^{-08} $ & $ 5.02 $
  \tabularnewline
		\hline
$2^{9}$ & $ 3.62\times 10^{-07} $ & $  3.00 $ & $ 2.83\times 10^{-09} $ & $ 3.99 $  & $ 1.19\times 10^{-09} $ & $ 5.08 $
\tabularnewline
		\hline
$2^{10}$ & $ 4.51\times 10^{-08} $ & $  3.00 $ & $ 1.77\times 10^{-10} $ & $  4.00 $  & $ 3.59\times 10^{-11} $ & $ 5.06 $
\tabularnewline
		\hline
$2^{11}$ & $ 5.62\times 10^{-09} $ & $  3.00 $  & $ 1.11\times 10^{-11} $ & $  4.00 $ & $ 1.09\times 10^{-12} $ & $ 5.04 $
\tabularnewline
		\hline
$2^{12}$ & $ 7.02\times 10^{-10} $ & $  3.00 $  & $ 7.09\times 10^{-13} $ & $ 3.97 $ & $ 8.09\times 10^{-14} $ & $ 3.75 $
    \tabularnewline
    \hline
	\end{tabularx}
	\caption{Convergence study for approximations of $f(x) = \exp\left(\sin(5.4 \pi t- 2.7 \pi) -\cos(2 \pi t) \right)$ using the derivative approximations using Gram polynomials of degree $d = 3,4,5$ with periodic extension length $ b = 2$.}
	\label{ex 1.1}
\end{table}

\begin{lemma}\label{lemma_projection}
    For $d\in \mathbb{N},\ d>1$,  the projection operators
    \begin{align*}
        P_d^L(\psi)(x)= \sum_{\ell = 0}^{d-1} \langle \psi, p_{\ell}^L\rangle_L p_{\ell}^L(x), \quad P_d^R(\psi)(x)= \sum_{\ell = 0}^{d-1} \langle \psi, p_{\ell}^R\rangle_R p_{\ell}^R(x),
    \end{align*}

   respectively satisfy 
  \begin{align*}
      \Vert (P_d^L\psi)^{(m)} - \psi^{(m)} \Vert_{\infty, [0, (d-1)h]} &\leq M h^{d-m}, \quad \Vert (P_d^R\psi)^{(m)} - \psi^{(m)} \Vert_{\infty, [1-(d-1)h,1]} \leq M h^{d-m}.
  \end{align*}
  for some constant $M>0$, for $0\leq m \leq d-1$ and for $\psi\in C^{d}\left( [0, 1]\right)$.
\end{lemma}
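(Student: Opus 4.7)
The plan is to first identify the projection $P_d^L$ with a classical Lagrange interpolation operator, after which the estimate becomes a standard polynomial interpolation bound on an interval of width $(d-1)h$. Throughout I take $s=1$, as is done in the modified scheme. The key observation is that $\{p_0^L, \ldots, p_{d-1}^L\}$ is an orthonormal basis of the $d$-dimensional space $\mathcal{P}_{d-1}$ under the non-degenerate discrete bilinear form $\langle \cdot, \cdot \rangle_L$ that samples over exactly $d$ nodes. The orthogonality $\langle P_d^L\psi - \psi, q\rangle_L = 0$ for every $q \in \mathcal{P}_{d-1}$ then forces $(P_d^L\psi)(jh) = \psi(jh)$ for $j = 0, 1, \ldots, d-1$: choosing $q$ to be the polynomial in $\mathcal{P}_{d-1}$ that interpolates the residual values on the grid collapses the discrete inner product to a sum of squares that must vanish. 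Thus $P_d^L$ is simply Lagrange interpolation on the equispaced nodes $\{0, h, \ldots, (d-1)h\}$, with the explicit representation
\begin{align*}
  P_d^L\psi(x) = \sum_{j=0}^{d-1} \psi(jh)\, L_j(x), \qquad L_j(x) = \ell_j(x/h),
\end{align*}
where $\ell_j \in \mathcal{P}_{d-1}$ are the fixed Lagrange basis polynomials for the integer nodes $\{0, 1, \ldots, d-1\}$. A simple rescaling yields the uniform bound $|L_j^{(m)}(x)| \le M_1 h^{-m}$ on $[0,(d-1)h]$ with $M_1$ independent of $h$.

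Next, I use a Taylor comparison to cancel the leading derivatives at the evaluation point. For fixed $x \in [0, (d-1)h]$, let $T_x\psi$ denote the degree-$(d-1)$ Taylor polynomial of $\psi$ centered at $x$. Since $T_x\psi \in \mathcal{P}_{d-1}$ is reproduced by interpolation, $P_d^L T_x\psi = T_x\psi$, and hence
\begin{align*}
  (P_d^L\psi)^{(m)}(x) - \psi^{(m)}(x)
  = \bigl[P_d^L(\psi - T_x\psi)\bigr]^{(m)}(x) - (\psi - T_x\psi)^{(m)}(x).
\end{align*}
The second term vanishes because the Taylor polynomial matches $\psi$ together with all its first $d-1$ derivatives at the center $x$. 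The remaining term is controlled by combining the Lagrange representation with the standard Taylor remainder estimate $|(\psi - T_x\psi)(jh)| \le \tfrac{1}{d!}\|\psi^{(d)}\|_{\infty,[0,1]}\,|jh - x|^d \le C h^d$, giving
\begin{align*}
  \bigl|[P_d^L(\psi - T_x\psi)]^{(m)}(x)\bigr|
  \le \sum_{j=0}^{d-1} |(\psi - T_x\psi)(jh)|\,|L_j^{(m)}(x)|
  \le d\, C\, M_1\, h^{d-m}.
\end{align*}
Taking the supremum over $x \in [0, (d-1)h]$ yields the left-hand bound. The right-hand estimate follows from the translation identity $p_\ell^R(x) = p_\ell^L(x - 1 + (d-1)h)$ recorded earlier in the paper: the substitution $y = x - 1 + (d-1)h$ converts the right projection on $[1-(d-1)h, 1]$ into the left projection on $[0, (d-1)h]$ applied to $\widetilde{\psi}(y) := \psi(y + 1 - (d-1)h)$, and since $\|\widetilde{\psi}^{(d)}\|_\infty = \|\psi^{(d)}\|_\infty$ on the corresponding intervals, the bound transfers directly.

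There is no real technical obstacle; the main difficulty is conceptual, namely recognizing that the discrete $L^2$ projection onto $\mathcal{P}_{d-1}$ at exactly $d$ nodes coincides with Lagrange interpolation. Once that identification is made, what remains is the classical equispaced polynomial interpolation error argument together with the rescaling $L_j(x) = \ell_j(x/h)$, which makes the $h$-dependence of $|L_j^{(m)}|$ explicit.
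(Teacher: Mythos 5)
Your proof is correct, and while it rests on the same two quantitative ingredients as the paper's argument (a Taylor remainder of size $O(h^d)$ at the $d$ nodes, and the scaling $\|(\text{basis})^{(m)}\|_{\infty,[0,(d-1)h]}=O(h^{-m})$ coming from the substitution $x\mapsto x/h$), it is organized around a structural observation the paper never makes: since $\langle\cdot,\cdot\rangle_L$ samples at exactly $d$ nodes and $\mathcal{P}_{d-1}$ has dimension $d$, the discrete least-squares projection $P_d^L$ \emph{is} Lagrange interpolation at $\{0,h,\dots,(d-1)h\}$. The paper instead keeps the Gram basis throughout: it inverts the Gram--Schmidt relation \eqref{Gram_Schmidt} to write $x^k=h^k\sum_\ell\beta_\ell p_\ell^L(x)$, Taylor-expands $\psi$ about the fixed point $0$, shows the projection reproduces the degree-$(d-1)$ Taylor part, and is then left to bound both $R_d^{(m)}(x)$ and $\sum_\ell\langle R_d,p_\ell^L\rangle_L (p_\ell^L)^{(m)}(x)$. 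Your version trades that bookkeeping for the interpolation identity plus a Taylor expansion centered at the evaluation point $x$, which kills the direct remainder term $(\psi-T_x\psi)^{(m)}(x)$ outright and leaves only the single sum $\sum_j(\psi-T_x\psi)(jh)L_j^{(m)}(x)$. The result is a shorter and arguably more transparent proof; what it gives up is that it is tied to the case where the number of nodes equals $\dim\mathcal{P}_{d-1}$ (i.e.\ the bilinear form uses exactly $d$ samples), whereas the paper's Gram-basis computation would survive verbatim if the projection were a genuine least-squares fit over more nodes. Your handling of the right projection by translation matches the paper's one-line reduction. Two cosmetic points: you should note explicitly that non-degeneracy of $\langle\cdot,\cdot\rangle_L$ on $\mathcal{P}_{d-1}$ follows because a polynomial of degree at most $d-1$ vanishing at $d$ distinct nodes is identically zero, and that the constant $M_1$ in $|L_j^{(m)}(x)|\le M_1h^{-m}$ depends only on $d$ and $m$, which is exactly the independence from $h$ the lemma requires.
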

\begin{proof}
We present the arguments for the left projection operator, and the corresponding result for the right counterpart follows similarly.

\begin{table} [!t] 
	\begin{tabularx}{0.99\textwidth}{  >{\setlength\hsize{0.4\hsize}\centering}X | >{\setlength\hsize{1.3\hsize}\centering}X | >{\setlength\hsize{0.9\hsize}\centering}X | >{\setlength\hsize{1.3\hsize}\centering}X | >{\setlength\hsize{0.9\hsize}\centering}X | >{\setlength\hsize{1.3\hsize}\centering}X |
			>{\setlength\hsize{0.9\hsize}\centering}X  }
		\hline
		\multirow{2}{\hsize}{$n$} &  \multicolumn{2}{>{\setlength\hsize{2.2\hsize}\centering}X |}{$d = 3$} &
		\multicolumn{2}{>{\setlength\hsize{2.2\hsize}\centering}X |}{$d = 4$} &
		\multicolumn{2}{>{\setlength\hsize{2.2\hsize}\centering}X }{$d = 5$} \tabularnewline
		\cline{2-7}
		& $e_n$ & $noc_n$ &  $e_n$  & $noc_n$  & $e_n$ & $noc_n$ \tabularnewline
		\hline
        \hline
$2^{6}$ & $ 2.47\times 10^{-04} $ & ---  & $ 1.93\times 10^{-04} $ & ---  & $ 3.46\times 10^{-04} $ & ---
 \tabularnewline
		\hline
$2^{7}$ & $ 2.31\times 10^{-05} $ & $ 3.42 $   & $ 1.51\times 10^{-05} $ & $ 3.67 $  & $ 6.05\times 10^{-06} $ & $ 5.84 $
\tabularnewline
		\hline
$2^{8}$ & $ 2.31\times 10^{-06} $ & $ 3.32 $  & $ 9.62\times 10^{-07} $ & $ 3.98 $  & $ 8.89\times 10^{-08} $ & $ 6.09 $
  \tabularnewline
		\hline
$2^{9}$ & $ 3.00\times 10^{-07} $ & $ 2.95 $ & $ 5.95\times 10^{-08} $ & $ 4.02 $ & $ 3.02\times 10^{-09} $ & $ 4.88 $
\tabularnewline
		\hline
$2^{10}$ & $ 3.97\times 10^{-08} $ & $ 2.92 $  & $ 3.68\times 10^{-09} $ & $ 4.02 $ & $ 1.16\times 10^{-10} $ & $ 4.71 $ 
\tabularnewline
		\hline
$2^{11}$ & $ 5.08\times 10^{-09} $ & $ 2.96 $  & $ 2.28\times 10^{-10} $ & $ 4.01 $   & $ 3.93\times 10^{-12} $ & $ 4.88 $ 
\tabularnewline
		\hline
$2^{12}$ & $ 6.44\times 10^{-10} $ & $ 2.98 $ & $ 1.40\times 10^{-11} $ & $ 4.03 $  & $ 1.28\times 10^{-13} $ & $ 4.94 $ 
\tabularnewline
		\hline 
	\end{tabularx}
 \caption{Convergence study for approximations of $f(x) = \exp\left(\sin(5.4 \pi t- 2.7 \pi) -\cos(2 \pi t) \right)$ using the derivative approximations using Gram polynomials of degree $d = 3,4,5$ with periodic extension length $ b = 1.0625$.}
	\label{ex 1.2}
\end{table}
        
We begin by noting that, using \cref{Gram_Schmidt}, we can express monomials $x^k$ as a linear combination of Gram polynomials as 
\begin{align*}
 x^{k}
 = h^{k}\sum_{\ell = 0}^{k} \beta_{\ell}  p_\ell^L(x)
\end{align*}
Therefore, the Taylor expansion around $x=0$ of $\psi$ can be expressed as
\begin{align}\label{psi}
    \psi(x)& = \sum\limits_{k = 0}^{d-1} \frac{\psi^{(k)}(0)}{k!} \left( h^{k} \sum_{\ell = 0}^{k} \beta_{\ell} p_{\ell}^L(x) \right) + R_d\left( x\right)= \sum_{\ell =0}^{d-1}\left(\sum_{k=\ell}^{d-1} \beta_{\ell} h^{k} \frac{\psi^{(k)}(0)}{k!} \right) p_{\ell}^L(x) + R_d\left( x\right)
\end{align}
where the remainder $R_d$ is
\begin{align}\label{R_d}
    R_d(x)& = \frac{1}{(d-1)!}\int_0^x ( x-t)^{d-1} \psi^{(d)}(t)\, dt
\end{align}

Now, for $0\leq m \leq d-1$, using \cref{psi}, we have
\begin{align*}
    \langle \psi, p_m^L\rangle_L 
    = \sum_{k=m}^{d-1} \beta_{m} h^{k} \frac{\psi^{(k)}(0)}{k!} + \langle R_d, p_m^L \rangle,
\end{align*}
and, therefore, the projection of $\psi$ reads
\begin{align*}
    \left(P_d\psi\right)(x) 
    &= \sum_{\ell =0}^{d-1} \left( \sum_{k=\ell}^{d-1} \beta_{\ell} h^{k} \frac{\psi^{(k)}(0)}{k!} \right) p_{\ell}^L(x) + \sum_{\ell =0}^{d-1}  \langle R_d, p_\ell^L \rangle p_{\ell}^L(x)
    \end{align*}
Thus, for $0\leq m \leq d-1$, we have

\begin{align*}
   \psi^{(m)}(x) -  \left(P_d\psi\right)^{(m)}(x)
   &= \left(R_d\right)^{(m)}\left( x\right) -  \sum_{\ell =m}^{d-1}  \langle R_d(x), p_\ell^L \rangle (p_{\ell}^L)^{(m)}(x)
\end{align*}
Finally, using \ref{R_d}, we have
\begin{align*}
    &\Vert \psi^{(m)} - \left(P_d\psi\right)^{(m)} \Vert_{\infty, [0, (d-1)h]} \leq\\
    &\sup_{x\in [0, (d-1)h]} \left( \left| \frac{1}{(d-1)!}\int_0^x (x-t)^{d-m -1}\psi^{(d)}(t) dt \right| + \left| \sum_{\ell =m}^{d-1}  \left(\sum_{j =0}^{d-1} R_d(j h) p_{\ell}^L(jh) \right)(p_{\ell}^L)^{(m)}(x) \right| \right) \leq \\
    & \frac{1}{(d-1)!}\Vert \psi^{(d)}\Vert_{\infty} \left( \frac{(d-1)^{d-m}}{d-m}+ (d-1)^{d} \left( \sum_{\ell = m}^{d-1} \Vert \tilde{p}_{\ell}^L\Vert_{\infty, [0, d-1]} \Vert(\tilde{p}_{\ell}^L)^{(m)} \Vert_{[0,d-1]} \right)\right) h^{d-m}
\end{align*}
where we have used the fact that $(p_{\ell}^L)(x) = (\tilde{p}_{\ell}^L)(x/h)$ and $\Vert (p_{\ell}^L)^{(m)} \Vert_{[0,(d-1)h]} = h^{-m}\Vert(\tilde{p}_{\ell}^L)^{(m)} \Vert_{[0,d-1]}$. Thus,
the result follows.

\end{proof}

\begin{table} [!t] 
	\begin{tabularx}{0.99\textwidth}{  >{\setlength\hsize{0.4\hsize}\centering}X | >{\setlength\hsize{1.3\hsize}\centering}X | >{\setlength\hsize{0.9\hsize}\centering}X | >{\setlength\hsize{1.3\hsize}\centering}X | >{\setlength\hsize{0.9\hsize}\centering}X | >{\setlength\hsize{1.3\hsize}\centering}X |
			>{\setlength\hsize{0.9\hsize}\centering}X  }
		\hline
		\multirow{2}{\hsize}{$n$} &  \multicolumn{2}{>{\setlength\hsize{2.2\hsize}\centering}X |}{$d = 3$} &
		\multicolumn{2}{>{\setlength\hsize{2.2\hsize}\centering}X |}{$d = 4$} &
		\multicolumn{2}{>{\setlength\hsize{2.2\hsize}\centering}X }{$d = 5$} \tabularnewline
		\cline{2-7}
		& $e_n$ & $noc_n$ &  $e_n$  & $noc_n$  & $e_n$ & $noc_n$ \tabularnewline
		\hline
        \hline
$2^{6}$ & $ 8.58\times 10^{-07} $ & --- & $ 9.96\times 10^{-08} $ & --- & $ 3.58\times 10^{-09} $ & --- 
 \tabularnewline
		\hline
$2^{7}$ & $ 1.08\times 10^{-07} $ & $ 2.98 $  & $ 6.17\times 10^{-09} $ & $ 4.01 $ & $ 1.18\times 10^{-10} $ & $ 4.92 $ 
\tabularnewline
		\hline
$2^{8}$ & $ 1.36\times 10^{-08} $ & $ 2.99 $ & $ 3.84\times 10^{-10} $ & $ 4.01 $ & $ 3.79\times 10^{-12} $ & $ 4.96 $
  \tabularnewline
		\hline
$2^{9}$ & $ 1.71\times 10^{-09} $ & $  3.00 $  & $ 2.40\times 10^{-11} $ & $  4.00 $  & $ 1.22\times 10^{-13} $ & $ 4.95 $
\tabularnewline
		\hline
$2^{10}$ & $ 2.14\times 10^{-10} $ & $  3.00 $ & $ 1.50\times 10^{-12} $ & $  4.00 $  & $ 2.39\times 10^{-14} $ & $ 2.36 $ 
\tabularnewline
		\hline
$2^{11}$ & $ 2.67\times 10^{-11} $ & $  3.00 $ & $ 1.10\times 10^{-13} $ & $ 3.77 $ & $ 2.48\times 10^{-14} $ & $ -0.06 $ 
\tabularnewline
		\hline
$2^{12}$ & $ 3.34\times 10^{-12} $ & $  3.00 $ & $ 5.96\times 10^{-14} $ & $ 0.88 $  & $ 2.06\times 10^{-14} $ & $ 0.27 $
  \tabularnewline
		\hline
	\end{tabularx}
 \caption{Convergence study for approximations of $f(x) = \exp(x)$ using the derivative approximations using Gram polynomials of degree $d = 3,4,5$ with periodic extension length $ b = 2$.}
	\label{ex 2.1}
\end{table}

\begin{table} [!b] 
	\begin{tabularx}{0.99\textwidth}{  >{\setlength\hsize{0.4\hsize}\centering}X | >{\setlength\hsize{1.3\hsize}\centering}X | >{\setlength\hsize{0.9\hsize}\centering}X | >{\setlength\hsize{1.3\hsize}\centering}X | >{\setlength\hsize{0.9\hsize}\centering}X | >{\setlength\hsize{1.3\hsize}\centering}X |
			>{\setlength\hsize{0.9\hsize}\centering}X  }
		\hline
		\multirow{2}{\hsize}{$n$} &  \multicolumn{2}{>{\setlength\hsize{2.2\hsize}\centering}X |}{$d = 3$} &
		\multicolumn{2}{>{\setlength\hsize{2.2\hsize}\centering}X |}{$d = 4$} &
		\multicolumn{2}{>{\setlength\hsize{2.2\hsize}\centering}X }{$d = 5$} \tabularnewline
		\cline{2-7}
		& $e_n$ & $noc_n$ &  $e_n$  & $noc_n$  & $e_n$ & $noc_n$ \tabularnewline
		\hline
        \hline
$2^{6}$ & $ 8.09\times 10^{-04} $ & ---  & $ 2.31\times 10^{-03} $ & --- & $ 4.13\times 10^{-03} $ & ---
 \tabularnewline
		\hline
$2^{7}$ & $ 1.23\times 10^{-04} $ & $ 2.72 $  & $ 1.87\times 10^{-04} $ & $ 3.63 $  & $ 6.45\times 10^{-05} $ & $  6 $ 
\tabularnewline
		\hline
$2^{8}$ & $ 2.11\times 10^{-05} $ & $ 2.54 $ & $ 1.20\times 10^{-05} $ & $ 3.97 $ & $ 7.84\times 10^{-07} $ & $ 6.36 $
  \tabularnewline
		\hline
$2^{9}$ & $ 3.05\times 10^{-06} $ & $ 2.79 $ & $ 7.42\times 10^{-07} $ & $ 4.01 $ & $ 3.96\times 10^{-08} $ & $ 4.31 $
\tabularnewline
		\hline
$2^{10}$ & $ 4.14\times 10^{-07} $ & $ 2.88 $ & $ 4.59\times 10^{-08} $ & $ 4.02 $ & $ 1.53\times 10^{-09} $ & $ 4.7 $
\tabularnewline
		\hline
$2^{11}$ & $ 5.38\times 10^{-08} $ & $ 2.94 $ & $ 2.85\times 10^{-09} $ & $ 4.01 $ & $ 5.23\times 10^{-11} $ & $ 4.87 $
\tabularnewline
		\hline
$2^{12}$ & $ 6.78\times 10^{-09} $ & $ 2.99 $ & $ 1.74\times 10^{-10} $ & $ 4.03 $ & $ 1.70\times 10^{-12} $ & $ 4.94 $
  \tabularnewline
		\hline
	\end{tabularx}
  \caption{Convergence study for approximations of $f(x) = \exp(x)$ using the derivative approximations using Gram polynomials of degree $d = 3,4,5$ with periodic extension length $ b = 1.0625$.}
	\label{ex 2.2}
\end{table}

To estimate $(f^c - f^c_{\text{ref}}) - \tau_{n}(f^c-f^c_{\text{ref}})$ in \cref{eq:error0}, we apply \cref{lemma_interp_error} to the function $g = f^c- f^c_{\text{ref}}$. In this context, as $g \equiv 0$ on $[0,1]$, the estimate simplifies to
\begin{align} \label{eq:error2}
    &\Vert \tau_{n}(f^c - f^c_{\text{ref}})-(f^c - f^c_{\text{ref}})\Vert_{\infty,[0,1]} \le
        M \left( \sum_{k=0}^d \left(  \left| (p - p^d_{\text{ref}})^{(k)}(b) \right| +\left| (p - p^d_{\text{ref}})^{(k)}(1)\right|\right)n^{-k}+n^{-d} \right)
\end{align}
To estimate $\left| (p - p^d_{\text{ref}})^{(k)}(x) \right|$, a crucial step is in rewriting the extension $p$ as 
 \begin{align*}
 p(x) &= \sum_{\ell = 0}^{d-1} \langle f,p_\ell^L \rangle_{L}\,  \left(\sum\limits_{m = 0}^{\ell} ( p_{\ell}^{L})^{(m)}\left(0\right) p_{m}^{b, 1}(x)\right)  + \sum_{\ell = 0}^{d-1} \langle f,p_\ell^R \rangle_{R}\,  \left(\sum\limits_{m = 0}^{\ell} ( p_{\ell}^{R})^{(m)}\left(0\right) p_{m}^{1, b}(x)\right)   \\
   &= \sum\limits_{m = 0}^{d-1} (P_d^L f)^{(m)} (0) p_{m}^{b, 1}(x) + \sum\limits_{m = 0}^{d-1} (P_d^R f)^{(m)} (1) p_{m}^{1, b}(x)
 \end{align*}
 As a result, we have
   \begin{align*}
     (p - p^d_{\text{ref}})^{(m)}(x) & = \sum\limits_{\ell = 0}^{d-1} \left[ (P_d^L f)^{(\ell)} (0) - f^{(\ell)}(0) \right](p_{\ell}^{b, 1})^{(m)}(x) + \sum\limits_{\ell = 0}^{d-1} \left[ (P_d^R f)^{(\ell)} (1) - f^{(\ell)}(1) \right] (p_{\ell}^{1, b})^{(m)}(x).
 \end{align*}
 Thus, for all $x\in [1,b]$, we have
    \begin{align} \label{eq:error3}
     | (p - p^d_{\text{ref}})^{(m)}(x) |& \le \sum\limits_{\ell = 0}^{d-1} \left[ (P_d^L f)^{(\ell)} (0) - f^{(\ell)}(0) \right]\Vert p_{\ell}^{b, 1})^{(m)}\Vert_{\infty,[1,b]} + \sum\limits_{\ell = 0}^{d-1} \left[ (P_d^R f)^{(\ell)} (1) - f^{(\ell)}(1) \right] \Vert p_{\ell}^{1, b})^{(m)}\Vert_{\infty,[1,b]}.
 \end{align}
 Combining the estimates in \cref{lemma_projection} and \cref{eq:error3}, we get
     \begin{align*} 
     | (p - p^d_{\text{ref}})^{(m)}(x) |& \le Mh^{d-m} \left( \sum\limits_{\ell = 0}^{d-1} \Vert p_{\ell}^{b, 1})^{(m)}\Vert_{\infty,[1,b]} + \sum\limits_{\ell = 0}^{d-1}  \Vert p_{\ell}^{1, b})^{(m)}\Vert_{\infty,[1,b]} \right),
 \end{align*}
 which, together with \cref{eq:error2}, leads to
\begin{align} \label{eq:error4}
    &\Vert \tau_{n}(f^c - f^c_{\text{ref}})-(f^c - f^c_{\text{ref}})\Vert_{\infty,[0,1]} \le
        M \left( \sum_{k=0}^d 2M\left( \sum\limits_{\ell = 0}^{d-1} \Vert p_{\ell}^{b, 1})^{(m)}\Vert_{\infty,[1,b]} + \sum\limits_{\ell = 0}^{d-1}  \Vert p_{\ell}^{1, b})^{(m)}\Vert_{\infty,[1,b]} \right)+1 \right)n^{-d}
\end{align}

\begin{table} [!t] 
	\begin{tabularx}{0.99\textwidth}{  >{\setlength\hsize{0.4\hsize}\centering}X | >{\setlength\hsize{1.3\hsize}\centering}X | >{\setlength\hsize{0.9\hsize}\centering}X | >{\setlength\hsize{1.3\hsize}\centering}X | >{\setlength\hsize{0.9\hsize}\centering}X | >{\setlength\hsize{1.3\hsize}\centering}X |
			>{\setlength\hsize{0.9\hsize}\centering}X  }
		\hline
		\multirow{2}{\hsize}{$n$} &  \multicolumn{2}{>{\setlength\hsize{2.2\hsize}\centering}X |}{$k = 50$} &
		\multicolumn{2}{>{\setlength\hsize{2.2\hsize}\centering}X |}{$k = 100$} &
		\multicolumn{2}{>{\setlength\hsize{2.2\hsize}\centering}X }{$k = 200$} \tabularnewline
		\cline{2-7}
		& $e_n$ & $noc_n$ &  $e_n$  & $noc_n$  & $e_n$ & $noc_n$ \tabularnewline
		\hline
        \hline
$2^{6}$ & $ 1.04\times 10^{-02} $ & --- & $ 3.19\times 10^{-01} $ & ---  & $ 1.32e+00 $ & --- 
 \tabularnewline
		\hline
 $2^{7}$ & $ 2.27\times 10^{-04} $ & $ 5.52 $  & $ 1.03\times 10^{-02} $ & $ 4.95 $ & $ 3.28\times 10^{-01} $ & $ 2.01 $ 
\tabularnewline
		\hline
$2^{8}$ & $ 1.35\times 10^{-06} $ & $ 7.39 $  & $ 4.37\times 10^{-04} $ & $ 4.56 $ & $ 2.84\times 10^{-02} $ & $ 3.53 $ 
  \tabularnewline
		\hline
$2^{9}$ & $ 6.98\times 10^{-09} $ & $ 7.6 $ & $ 5.05\times 10^{-06} $ & $ 6.44 $ & $ 5.47\times 10^{-04} $ & $ 5.7 $ 
\tabularnewline
		\hline
 $2^{10}$ & $ 7.96\times 10^{-11} $ & $ 6.45 $ & $ 6.98\times 10^{-08} $ & $ 6.18 $ & $ 1.97\times 10^{-05} $ & $ 4.8 $ 
\tabularnewline
		\hline
$2^{11}$ & $ 3.94\times 10^{-12} $ & $ 4.34 $ & $ 1.28\times 10^{-09} $ & $ 5.77 $  & $ 4.60\times 10^{-07} $ & $ 5.42 $
\tabularnewline
		\hline
$2^{12}$ & $ 1.78\times 10^{-12} $ & $ 1.15 $ & $ 6.28\times 10^{-11} $ & $ 4.35 $ & $ 1.18\times 10^{-08} $ & $ 5.29 $
  \tabularnewline
		\hline
	\end{tabularx}
	\caption{Convergence study for approximations of $f(x) = \exp(-\cos(k x))$ with $k = 50, 100$ and $200$ using the derivative approximations using Gram polynomials of degree $d=4$ with periodic extension length $ b = 2$}
	\label{ex 3.1}
\end{table}

\begin{table} [!t] 
	\begin{tabularx}{0.99\textwidth}{  >{\setlength\hsize{0.4\hsize}\centering}X | >{\setlength\hsize{1.3\hsize}\centering}X | >{\setlength\hsize{0.9\hsize}\centering}X | >{\setlength\hsize{1.3\hsize}\centering}X | >{\setlength\hsize{0.9\hsize}\centering}X | >{\setlength\hsize{1.3\hsize}\centering}X |
			>{\setlength\hsize{0.9\hsize}\centering}X  }
		\hline
		\multirow{2}{\hsize}{$n$} &  \multicolumn{2}{>{\setlength\hsize{2.2\hsize}\centering}X |}{$k = 50$} &
		\multicolumn{2}{>{\setlength\hsize{2.2\hsize}\centering}X |}{$k = 100$} &
		\multicolumn{2}{>{\setlength\hsize{2.2\hsize}\centering}X }{$k = 200$} \tabularnewline
		\cline{2-7}
		& $e_n$ & $noc_n$ &  $e_n$  & $noc_n$  & $e_n$ & $noc_n$ \tabularnewline
		\hline
        \hline
$2^{6}$ & $ 1.21\times 10^{-02} $ & ---   & $ 3.38\times 10^{-01} $ & ---   & $ 1.32e+00 $ & ---
 \tabularnewline
		\hline
 $2^{7}$ & $ 2.44\times 10^{-04} $ & $ 5.63 $ & $ 1.02\times 10^{-02} $ & $ 5.06 $ & $ 3.40\times 10^{-01} $ & $ 1.95 $ 
\tabularnewline
		\hline
$2^{8}$ & $ 1.50\times 10^{-06} $ & $ 7.35 $ & $ 4.39\times 10^{-04} $ & $ 4.53 $ & $ 2.78\times 10^{-02} $ & $ 3.62 $ 
  \tabularnewline
		\hline
$2^{9}$ & $ 1.09\times 10^{-08} $ & $ 7.11 $ & $ 5.06\times 10^{-06} $ & $ 6.44 $  & $ 5.46\times 10^{-04} $ & $ 5.67 $
\tabularnewline
		\hline
$2^{10}$ & $ 3.83\times 10^{-10} $ & $ 4.83 $  & $ 7.00\times 10^{-08} $ & $ 6.18 $ & $ 1.96\times 10^{-05} $ & $ 4.8 $
\tabularnewline
		\hline
$2^{11}$ & $ 1.28\times 10^{-11} $ & $ 4.90 $ & $ 1.29\times 10^{-09} $ & $ 5.77 $ & $ 4.60\times 10^{-07} $ & $ 5.42 $
\tabularnewline
		\hline
$2^{12}$ & $ 4.14\times 10^{-13} $ & $ 4.95 $  & $ 2.94\times 10^{-11} $ & $ 5.45 $ & $ 1.18\times 10^{-08} $ & $ 5.29 $ 
  \tabularnewline
		\hline
	\end{tabularx}
 \caption{Convergence study for approximations of $f(x) = \exp(-\cos(k x))$ with $k = 50, 100$ and $200$ using the derivative approximations using Gram polynomials of degree $d=4$ with periodic extension length $ b = 1.0625$.}
	\label{ex 3.2}
\end{table}

\begin{table} [!t] 
	\begin{tabularx}{0.99\textwidth}{  >{\setlength\hsize{0.4\hsize}\centering}X | >{\setlength\hsize{1.3\hsize}\centering}X | >{\setlength\hsize{0.9\hsize}\centering}X | >{\setlength\hsize{1.3\hsize}\centering}X | >{\setlength\hsize{0.9\hsize}\centering}X | >{\setlength\hsize{1.3\hsize}\centering}X |
			>{\setlength\hsize{0.9\hsize}\centering}X  }
		\hline
		\multirow{2}{\hsize}{$n$} &  \multicolumn{2}{>{\setlength\hsize{2.2\hsize}\centering}X |}{$\epsilon = 1$} &
		\multicolumn{2}{>{\setlength\hsize{2.2\hsize}\centering}X |}{$\epsilon = 0.1$} &
		\multicolumn{2}{>{\setlength\hsize{2.2\hsize}\centering}X }{$\epsilon = 0.01$} \tabularnewline
		\cline{2-7}
		& $e_n$ & $noc_n$ &  $e_n$  & $noc_n$  & $e_n$ & $noc_n$ \tabularnewline
		\hline
        \hline
$2^{6}$ & $ 2.26\times 10^{-09} $ & --- & $ 2.66\times 10^{-07} $ & --- & $ 2.06\times 10^{-01} $ & ---
 \tabularnewline
		\hline
 $2^{7}$ & $ 6.70\times 10^{-11} $ & $ 5.08 $ & $ 6.73\times 10^{-09} $ & $ 5.31 $ & $ 3.02\times 10^{-02} $ & $ 2.77 $
\tabularnewline
		\hline
$2^{8}$ & $ 2.03\times 10^{-12} $ & $ 5.04 $ & $ 1.89\times 10^{-10} $ & $ 5.15 $ & $ 5.98\times 10^{-04} $ & $ 5.66 $
  \tabularnewline
		\hline
$2^{9}$ & $ 6.85\times 10^{-14} $ & $ 4.89 $ & $ 5.61\times 10^{-12} $ & $ 5.08 $  & $ 1.92\times 10^{-07} $ & $ 11.61 $
\tabularnewline
		\hline
 $2^{10}$ & $ 1.70\times 10^{-14} $ & $ 2.01 $ & $ 1.71\times 10^{-13} $ & $ 5.03 $ & $ 2.22\times 10^{-14} $ & $ 23.04 $
\tabularnewline
		\hline
$2^{11}$ & $ 1.67\times 10^{-14} $ & $ 0.03 $ & $ 8.63\times 10^{-15} $ & $ 4.31 $ & $ 8.91\times 10^{-15} $ & $ 1.32 $
\tabularnewline
		\hline
$2^{12}$ & $ 1.91\times 10^{-14} $ & $ -0.20 $ & $ 5.28\times 10^{-15} $ & $ 0.71 $ & $ 8.37\times 10^{-15} $ & $ 0.09 $ 
  \tabularnewline
		\hline
	\end{tabularx} 
  \caption{Convergence study for approximations of $f_{\epsilon}(x)= ((x-1/3)^2+ \epsilon^2)^{-1}$ with $\epsilon = 1, ~0.1$ and $0.01$ using the derivative approximations using Gram polynomials of degree $d=4$ with periodic extension length $ b = 2$.}
	\label{ex 4.1}
\end{table}

\begin{proof}[Proof of \cref{theorem: main}]
The result directly follows from \cref{eq:error0} and the estimates \cref{eq:error_ref}, and \cref{eq:error4}.

\end{proof}

\section{Numerical Implementation} \label{sec: numerics}

To show that the theoretical convergence rates mentioned above are reached, we will now go over a few numerical experiments. In pursuit of this goal, we examine the issue of utilizing functional data on a uniform grid of size $n$ to approximate a function $f(x)$ on $[0, 1]$. The relative approximation error en, which we obtain, is noted as
\begin{align*}
    e_n &= \max_{0\leq j \leq N}\vert\tau_nf(z_j) - f(z_j) \vert/ \max_{0\leq j \leq N}\vert f(z_j) \vert 
\end{align*}
where $N = 2^{15}$ and $z_j = j/N$ are the evaluation points on a large uniform grid where approximate and exact values are compared.

In the first set of experiments, we study the effect of $b$ and $d$ on the rate of convergence as $n$ increases.
The results in table \ref{ex 1.1} and table \ref{ex 1.2} for a smooth function $f(x) = \exp\left(\sin(5.4 \pi x- 2.7 \pi )-\cos(2 \pi x) \right)$ shows that the numerical
rate of convergence indeed matches the theoretical rate $d$. In addition, we observe that when we reduce the domain of the periodic extension, denoted as $b$, the rates continue to be faithful to the assertions. Next, we look at $f(x) = \exp(x)$. The findings for different values of $d$ and $b$ demonstrate that the error obeys the theoretical assertion, as shown in tables \ref{ex 2.1} and \ref{ex 2.2}. Furthermore, as anticipated, the accuracy of approximations remains satisfactory even for functions that have substantial oscillations, as clearly seen in table \ref{ex 3.1} and table \ref{ex 3.2}.

Finally, we conclude this section by looking at the approximation quality of the proposed approach for $f_{\epsilon}(x)= ((x-1/3)^2+ \epsilon^2)^{-1}$ on $[0, 1]$ that has poles in the complex plane at $z = 1/3 \pm i$.
The Fourier continuation approximations employed in table \ref{ex 4.1} and table \ref{ex 4.2} correspond to the parameters $d = 5$ and $b = 2, ~ 1.0625$ respectively. Consequently, it is anticipated that these approximations will converge at a rate of $5$, as presented in the table, especially for $\epsilon = 1$ and $\epsilon = 0.1$. The results obtained when $\epsilon = 0.01$ demonstrate super algebraic convergence because of the comparatively small boundary data in comparison to the peak function value of $\epsilon^2$ at $x = 1/3$.

\begin{table} [!t] 
	\begin{tabularx}{0.99\textwidth}{  >{\setlength\hsize{0.4\hsize}\centering}X | >{\setlength\hsize{1.3\hsize}\centering}X | >{\setlength\hsize{0.9\hsize}\centering}X | >{\setlength\hsize{1.3\hsize}\centering}X | >{\setlength\hsize{0.9\hsize}\centering}X | >{\setlength\hsize{1.3\hsize}\centering}X |
			>{\setlength\hsize{0.9\hsize}\centering}X  }
		\hline
		\multirow{2}{\hsize}{$n$} &  \multicolumn{2}{>{\setlength\hsize{2.2\hsize}\centering}X |}{$\epsilon = 1$} &
		\multicolumn{2}{>{\setlength\hsize{2.2\hsize}\centering}X |}{$\epsilon = 0.1$} &
		\multicolumn{2}{>{\setlength\hsize{2.2\hsize}\centering}X }{$\epsilon = 0.01$} \tabularnewline
		\cline{2-7}
		& $e_n$ & $noc_n$ &  $e_n$  & $noc_n$  & $e_n$ & $noc_n$ \tabularnewline
		\hline
        \hline
$2^{6}$ & $ 1.29\times 10^{-03} $ & ---  & $ 3.06\times 10^{-04} $ & ---  & $ 2.06\times 10^{-01} $ & ---
 \tabularnewline
		\hline
 $2^{7}$ & $ 2.01\times 10^{-05} $ & $  6.00 $ & $ 4.77\times 10^{-06} $ & $  6.00 $   & $ 3.02\times 10^{-02} $ & $ 2.77 $
\tabularnewline
		\hline
$2^{8}$ & $ 2.53\times 10^{-07} $ & $ 6.32 $  & $ 6.09\times 10^{-08} $ & $ 6.29 $ & $ 5.98\times 10^{-04} $ & $ 5.66 $
  \tabularnewline
		\hline
$2^{9}$ & $ 1.26\times 10^{-08} $ & $ 4.32 $ & $ 3.02\times 10^{-09} $ & $ 4.33 $ & $ 1.92\times 10^{-07} $ & $ 11.61 $
\tabularnewline
		\hline
 $2^{10}$ & $ 4.85\times 10^{-10} $ & $ 4.70 $ & $ 1.16\times 10^{-10} $ & $ 4.70 $  & $ 1.27\times 10^{-12} $ & $ 17.20 $
\tabularnewline
		\hline
$2^{11}$ & $ 1.66\times 10^{-11} $ & $ 4.87 $  & $ 3.96\times 10^{-12} $ & $ 4.87 $  & $ 4.29\times 10^{-14} $ & $ 4.89 $
\tabularnewline
		\hline
$2^{12}$ & $ 5.39\times 10^{-13} $ & $ 4.94 $ & $ 1.29\times 10^{-13} $ & $ 4.94 $ & $ 7.09\times 10^{-15} $ & $ 2.60 $
  \tabularnewline
		\hline
	\end{tabularx} 
   \caption{Convergence study for approximations of $f_{\epsilon}(x)= ((x-1/3)^2+ \epsilon^2)^{-1}$ with $\epsilon = 1, ~0.1$ and $0.01$ using the derivative approximations using Gram polynomials of degree $d=4$ with periodic extension length $ b = 1.0625$.}
	\label{ex 4.2}
\end{table}


\end{document}